\title{Green relations over finite monoids of $G$-equivariant functions}
\author{Ram\'on H. Ruiz-Medina\footnote{Email: harath.ruiz@academicos.udg.mx}, \  Victor M. Lara-Gómez \\[1em]
\small{Centro Universitario de Ciencias Exactas e Ingenier\'ias}, \\ 
\small{Universidad de Guadalajara, Guadalajara, M\'exico.}}
\date{}
\newtheorem{teorema}{Theorem}[]
\newtheorem{lema}[teorema]{Lemma}
\newtheorem{observacion}[teorema]{Remark}
\newtheorem{corolario}[teorema]{Corollary}
\newtheorem{proposicion}[teorema]{Proposition}
\newtheorem{ejemplo}[teorema]{Example}
\newtheorem{definicion}[teorema]{Definition}
\newtheorem{afirmacion}[teorema]{Claim}
\newcommand{\EndG}{\mathrm{End}_{G}(X)}
\newcommand{\AutG}{\mathrm{Aut}_{G}(X)}
\newcommand{\Hbox}{\mathcal{B}_{[H]}}
\newcommand{\StabG}{\mathrm{Stab}_{G}(X)}
\newcommand{\ConjG}{\mathrm{Conj}(G)}
\newcommand{\ConjX}{\mathrm{Conj}_{G}(X)}
\begin{document}

\maketitle

\begin{abstract}
For a group $G$ acting over a set $X$, the set of all the $G$-equivariant functions, i.e., the set of functions which conmute with the action, ($g\cdot f(x)=g\cdot f(x), \forall g\in G, \forall x\in X$), is a monoid with the composition. The Green Relations are powerful tools to comprehend the structure of a semigroup. We study the case where $X$ is a finite set and compute the green relations for its monoid of $G$-equivariant functions, attempting to describe them based on some particular elements in the monoid called elementary collapsings.  \\

\textbf{Keywords:} Green relations for semigroups, $G$-equivariant function, elementary collapsings. \\

\textbf{MSC 2020:} 20B25, 20E22, 20M20.
\end{abstract}

\section{Introduction}  

For a given group \( G \), a \( G \)-set refers to a set \( X \) upon which \( G \) acts. In other words, there exists a function \( \cdot : G \times X \to X \) such that, for all \( x \in X \), the identity element \( e \) of \( G \) acts as \( e \cdot x = x \), and for all \( g, h \in G \) and \( x \in X \), the action satisfies the condition \( g \cdot (h \cdot x) = (gh) \cdot x \). In semigroup theory, such sets are often referred to as \( G \)-acts. A \( G \)-equivariant transformation, or \( G \)-endomorphism, of a \( G \)-set \( X \) is a function \( \tau : X \to X \) that satisfies \( \tau(g \cdot x) = g \cdot \tau(x) \) for all \( g \in G \) and \( x \in X \). These transformations play a fundamental role in the category of \( G \)-sets and have applications in areas such as equivariant topology, representation theory, and statistical inference.\\

The monoid \( \EndG \) consists of all \( G \)-equivariant transformations of \( X \), with composition as the operation, whereas the group of units \( \AutG \) consists of all bijective \( G \)-equivariant transformations. These objects have been extensively studied in various mathematical contexts (see \cite{cita21}, \cite{cita22}, \cite{cita23}, \cite{cita24}). Numerous cases involving actions of groups and \( G \)-equivariant functions have been examined, particularly in the case of free \( G \)-sets, where the stabilizer of each point is trivial. These sets serve as examples of independence algebras (see \cite{cita11}, \cite{cita12}, \cite{cita13}, \cite{cita14}, \cite{rotman} for further information).\\

Let the set of conjugacy classes of subgroups of \( G \) be denoted as \( \ConjG \), where each conjugacy class is represented as \( [H] := \{ g^{-1} H g : g \in G \} \). When there is a countable number of conjugacy classes, we label them as \( [H_1], [H_2], \dots, [H_r], \dots \), ordered by the size of the subgroups, such that \( |H_1| \leq |H_2| \leq \dots \leq |H_r| \leq \dots \). We also define a finite set with \( r \) elements as \( [r] = \{1, 2, \dots, r\} \). A partial order can be established over \( \ConjG \), given by:
\[
H \leq K \iff \exists g \in G \text{ such that } H \leq g^{-1} K g.
\]

Given the action of a group \( G \) on a set \( X \), we define the \( G \)-orbits and the stabilizer of elements in \( X \) as follows, for \( x \in X \):  
\[
Gx := \{ g \cdot x \mid g \in G \}, \quad G_x := \{ g \in G \mid g \cdot x = x \}.
\]
Based on the stabilizer, we then do define the following sets within \( X \). For \( H \leq G \), let:
\[
\mathcal{B}_H := \{ x \in X \mid G_x = H \},
\]
\[
\mathcal{B}_{[H]} := \{ x \in X \mid [G_x] = [H] \}.
\]
We are able to extend the partial order of conjugacy classes to these sets as:
\[
\mathcal{B}_{[H]} \leq \mathcal{B}_{[K]} \iff [H] \leq [K].
\]
Let \( X/G \) and \( \mathcal{B}_{[H]} / G \) denote the sets of orbits of the action of \( G \) on \( X \) and \( \mathcal{B}_{[H]} \), respectively. Verify that some conjugacy classes might not be included in the stabilizer set of the action of \( G \) on \( X \), so we define the set of subgroups of \( G \) that act as stabilizers for elements in \( X \) as:
\[
\StabG := \{ G_x \mid x \in X \}.
\]

It is important to note that provided that \( H \in \StabG \), then the entire conjugacy class of \( H \), \( [H] \), is contained within \( \StabG \), given that whether \( h \in G_x \) for some \( x \in X \), we acquire:
\[
(g^{-1} h g) \cdot (g^{-1} \cdot x) = g^{-1} \cdot x \quad \forall g \in G,
\]
meaning that any conjugate of \( h \) stabilizes at least one element of \( X \). We denote by \( \ConjX \) the set of conjugacy classes of subgroups of \( G \) that belong to \( \StabG \), i.e.:
\[
\ConjX := \{ [H] \in \ConjG \mid H \in \StabG \}.
\]

The following result is well-known in the theory of \( G \)-equivariant functions.

\begin{lema}\label{lema1}
Let \( G \) be a group acting on a set \( X \). Given \( x,y \in X \), the following holds:

\begin{enumerate} 
\item[i)] There exists a $ G $-equivariant function $ \tau \in \EndG $  such that $ \tau(x)=y $  if and only if $ G_{x} \leq G_{y} $.
\item[ii)] There exists a bijective $ G $-equivariant function $ \sigma \in \AutG $  such that $ \sigma(x)=y $  if and only if $ G_{x} = G_{y} $.
\end{enumerate}
\end{lema}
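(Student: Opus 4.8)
The plan is to handle each part by establishing both implications, exploiting the orbit decomposition of $X$ and the fact that a $G$-equivariant map is completely determined, orbit by orbit, by where it sends a single representative. For the forward implication of i), I would start from a given $\tau \in \EndG$ with $\tau(x) = y$ and test it against the stabilizer of $x$: if $h \in G_x$, then
\[
h \cdot y = h \cdot \tau(x) = \tau(h \cdot x) = \tau(x) = y,
\]
so $h \in G_y$, which yields $G_x \leq G_y$ at once.

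For the converse of i), assuming $G_x \leq G_y$, I would define $\tau$ on the orbit $Gx$ by the rule $g \cdot x \mapsto g \cdot y$ and extend it by the identity on $X \setminus Gx$. The crucial point is well-definedness: if $g \cdot x = g' \cdot x$, then $g^{-1} g' \in G_x \leq G_y$, so $g \cdot y = g' \cdot y$. Equivariance on $Gx$ is built into the defining rule, and on the complementary orbits it holds because the identity is equivariant and orbits are $G$-invariant; gluing the two pieces produces a single $\tau \in \EndG$ with $\tau(x) = y$.

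For ii), the forward direction reuses the stabilizer computation: applied to $\sigma$ it gives $G_x \leq G_y$, while for the reverse inclusion, if $h \in G_y$ then $\sigma(h \cdot x) = h \cdot \sigma(x) = h \cdot y = y = \sigma(x)$, and injectivity of $\sigma$ forces $h \cdot x = x$, i.e. $h \in G_x$; hence $G_x = G_y$. For the converse, assuming $G_x = G_y$, I would again use the orbit map $g \cdot x \mapsto g \cdot y$, which now becomes a well-defined bijection between the equal-sized orbits $Gx$ and $Gy$, injectivity following from $G_y \leq G_x$. If $Gx = Gy$, extending by the identity on $X \setminus Gx$ already gives a bijection of $X$; if $Gx \neq Gy$, I would additionally set $g \cdot y \mapsto g \cdot x$ so that $\sigma$ swaps the two orbits and is the identity elsewhere, producing $\sigma \in \AutG$ with $\sigma(x) = y$.

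The forward implications and the equivariance bookkeeping are routine; the only genuinely delicate point is the well-definedness of the orbit map $g \cdot x \mapsto g \cdot y$, which is precisely where the stabilizer hypothesis enters, together with the need in ii) to promote this map to a global bijection via the case distinction on whether $x$ and $y$ lie in the same orbit.
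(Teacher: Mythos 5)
Your proof is correct, and it is worth noting that the paper itself does not prove this lemma at all: it is stated as a well-known result with the details deferred to the cited reference (Castillo-Ram\'irez and Ruiz-Medina, \emph{Semigroup Forum} 2023). Your argument is the standard one and fills that gap completely: the stabilizer computation $h\cdot y = \tau(h\cdot x) = y$ gives the forward implications, the orbit map $g\cdot x \mapsto g\cdot y$ (well-defined exactly because $G_x \leq G_y$, injective exactly when $G_y \leq G_x$) extended by the identity gives the converse of i), and your case distinction in ii) --- permutation of a single orbit when $Gx = Gy$, swap of two disjoint orbits otherwise --- correctly handles bijectivity. The only point deserving emphasis is the one you already flagged: well-definedness of the orbit map is where the hypothesis enters, and your verification of it (and of the second rule $g\cdot y \mapsto g\cdot x$ in the swap case) is sound. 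The argument also uses no finiteness assumption, so it proves the lemma in the generality in which the paper states it.
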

\noindent Further details on this result may be found in \cite{paper}.\\
An important property of the $G$-equivariant functions is given below. 

\begin{proposicion}
A constant function $f(t)=c$ is $G$-equivariant if and only if $G_{c}=G$. 
\end{proposicion}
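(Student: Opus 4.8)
The plan is to unwind the definition of $G$-equivariance directly, since for a constant function the equivariance condition collapses to a single statement about the stabilizer of the constant value. I would establish both implications by evaluating the defining identity $f(g\cdot x)=g\cdot f(x)$ in the special case where $f$ is identically equal to $c$, and observing that the variable $x$ drops out entirely.

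For the forward implication, I would suppose that $f(t)=c$ is $G$-equivariant. The equivariance condition $f(g\cdot x)=g\cdot f(x)$ must then hold for every $g\in G$ and every $x\in X$. Since $f$ is constant, the left-hand side simplifies to $f(g\cdot x)=c$, while the right-hand side is $g\cdot f(x)=g\cdot c$. Equating the two yields $g\cdot c=c$ for all $g\in G$, which is exactly the assertion that every element of $G$ stabilizes $c$, i.e.\ $G_c=G$.

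For the converse, I would assume $G_c=G$, equivalently $g\cdot c=c$ for every $g\in G$. Then for arbitrary $g\in G$ and $x\in X$ the computation $f(g\cdot x)=c=g\cdot c=g\cdot f(x)$ shows that the equivariance identity holds, so $f\in\EndG$.

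I do not anticipate any genuine obstacle: the result is an immediate translation of the equivariance identity into a stabilizer condition. The only subtlety worth flagging is that the quantifier over $x$ is inessential—only the constant value $c$ enters the computation—so the full equivariance requirement reduces cleanly and equivalently to the single condition $G_c=G$.
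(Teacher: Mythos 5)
Your proposal is correct and follows essentially the same argument as the paper: evaluate the equivariance identity $f(g\cdot x)=g\cdot f(x)$ at the constant value, obtaining $g\cdot c=c$ for all $g\in G$ in one direction, and reversing the computation for the converse. There is no gap and nothing further to compare.
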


\begin{proof}
If $f$ is a $G$-equivariant function, it holds that, given $g\in G$:
$$c=f(g\cdot t)= g\cdot f(t)=g\cdot c,\ \forall t\in X.$$
This means that $g\in G_{c}$, which implies that $G=G_{c}$.\\
On the other hand, provided that $G_{c}=G$, it arises that
$$g\cdot f(t)=g\cdot c= c= f(g\cdot t),\ \forall t\in X. $$
\end{proof}

Let us recall some important definitions of properties of functions that are useful for our objectives. Let \( f:X\rightarrow X \) be a function. The image of \( f \) is defined as  
\[
\operatorname{Im}(f):=\{f(x) \mid x\in X\}.
\]  
The kernel of \( f \) is given by:  
\[
\ker(f):=\{(x,y)\in X\times X \mid f(x)=f(y)\}.
\]  
The kernel of \( f \) defines an equivalence relation on \( X \). The following properties of the image and the kernel of a function are well known, and they shall be helpful for our goals. Given \( f,g: X\rightarrow X \), it holds that:

\begin{enumerate}
\item[$\bullet$] $Im(fg) \subseteq Im(f)$,
\item[$\bullet$] $ker(g)\subseteq ker(fg)$.
\end{enumerate}

In this document, we explore the Green's relations of the monoid of $G$-equivariant functions. Green's relations are five equivalence relations that characterize the elements of a semigroup in terms of the principal ideals they generate. Rather than working directly with a semigroup $S$, it is more convenient to define Green's relations on the monoid, which are defined as follows. Given an element in a semigroup, $a\in S$, we define the principal left, right, and bilateral (or two-sided) ideal of $a$, respectively, as follows:
$$Sa:=\{sa|\ s\in S\},$$
$$aS:=\{as|\ s\in S\},$$
$$SaS:=\{s_{1}as_{2}|\ s_{1},s_{2}\in S\}.$$
Many authors prefer to define Green's relations over a monoid instead of a semigroup, since the existence of an identity element makes some calculations easier. There exist five Green's relations, namely the $\mathcal{L}$ relation, the $\mathcal{R}$ relation, the $\mathcal{J}$ relation, the $\mathcal{H}$ relation, and the $\mathcal{D}$ relation, defined below. \\

Given two elements $a,b\in S$, we claim that:
\begin{enumerate}
\item[$\bullet$] $a \sim_{\mathcal{L}} b$ if and only if $Sa=Sb$.
\item[$\bullet$] $a \sim_{\mathcal{R}} b$ if and only if $aS=bS$.
\item[$\bullet$] $a \sim_{\mathcal{J}} b$ if and only if $SaS=SbS$.
\item[$\bullet$] $a \sim_{\mathcal{H}} b$ if and only if  $a \sim_{\mathcal{L}} b$ and $a \sim_{\mathcal{R}} b$.
\item[$\bullet$] $a \sim_{\mathcal{D}} b$ if and only if there exists an element $c\in S$ such that $a \sim_{\mathcal{L}} c$ and $c \sim_{\mathcal{R
}} b$.
\end{enumerate}

Some of the deepest properties of Green's relations can be found in \cite{Howie} and \cite{Hig}, while several fully developed examples are presented in \cite{Gould}, \cite{Gril}, and \cite{Pet}. For some propositions we shall denote $\EndG$ simply by $S$, in order to consider it the semigroup in which we are computing its Green's relations.

\begin{ejemplo}\label{ejemplo1}
Let $X=\{0,1,2,3\}$ and define an action of
$\mathbb{Z}_{2}=\{\overline 0, \overline 1\}$ over $X$ as:
$$\overline 1 \cdot x:= \left \{  \begin{array}{c} \overline 1 \cdot 0 = 0\\\overline 1 \cdot 1 = 2\\\overline 1 \cdot 2 = 1\\\overline 1 \cdot 3 = 3  \end{array} \right.$$\\

\begin{figure}[ht]\label{figurauno}
\centering
\includegraphics[width=5.5in]{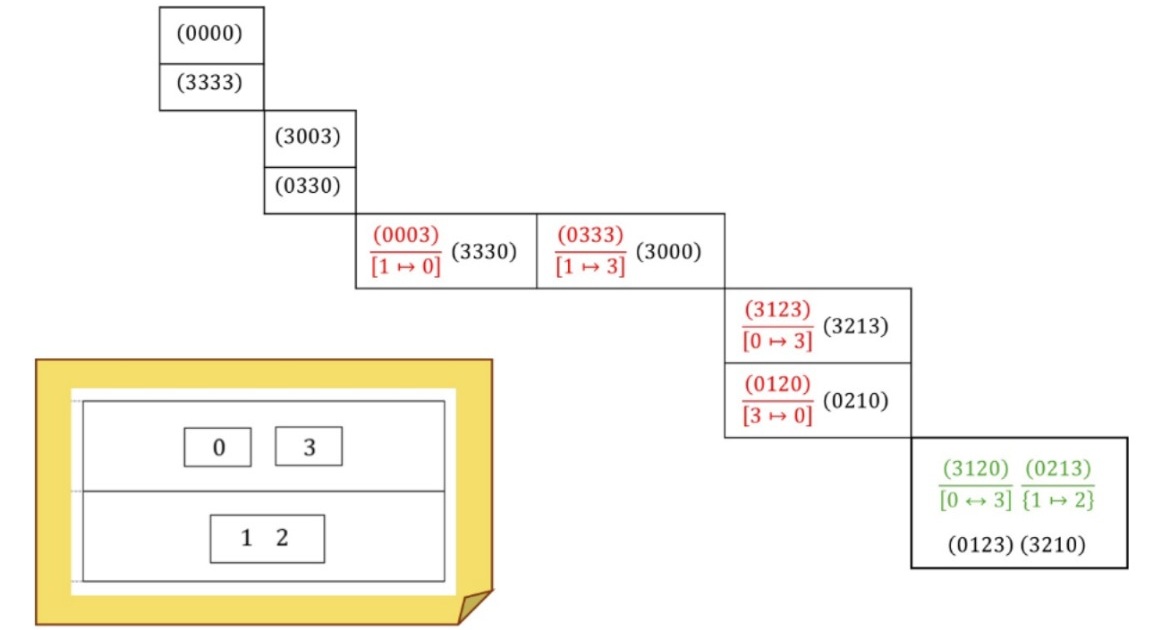}
\caption{Green's relations of a given $\mathbb{Z}_{2}$-set.}
\end{figure}

For the elements in $\EndG$ we use a transformation notations as follows:
$$\tau(t) \footnote{This is not a $G$-equivariant functions, is only illustrative for the notation.}=\left \{ \begin{array}{c} 0 \rightarrow 0 \\1 \rightarrow 3 \\2 \rightarrow 0 \\3 \rightarrow 2 \\ \end{array} \right.= (0302).$$

There exist only 16 $G$-equivariant functions in $\EndG$ for this $G$-set. After calculations, the Green's relations for this monoid generates the following structure. \\

Where the colored functions are a special kind of $G$-equivariant functions in $\EndG$, that shall be particularly addressed in the following sections. All the elements in a block are $\mathcal{D}$-related, while all the elements in the same column are $\mathcal{L}$-related, and the elements in the rows are $\mathcal{R}$-related. Consequently, the elements that are $\mathcal{H}$-related are placed in the same ''cell'', the smallest squares in the diagram.

\end{ejemplo}

There exist some interesting properties as consequences of the $G$-equivariance related to some Green's relations. We address one that shall be helpful for further results.

\begin{proposicion}\label{esta}
Let $\tau,\eta \in \EndG$ be such that $\tau \sim_{\mathcal{L}} \eta$. Then, 
$$G_{\tau(x)}=G_{\eta(x)}, \forall x\in X.$$
\end{proposicion}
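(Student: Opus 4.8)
The plan is to translate the $\mathcal{L}$-relation into concrete factorizations and then read off the stabilizer containments from Lemma \ref{lema1}. Since $S=\EndG$ is a monoid containing the identity $\id$, the hypothesis $\tau \sim_{\mathcal{L}} \eta$, i.e. $S\tau = S\eta$, immediately gives $\tau = \id\cdot\tau \in S\tau = S\eta$ and $\eta = \id\cdot\eta \in S\eta = S\tau$. Hence there exist $\alpha,\beta \in \EndG$ with $\tau = \alpha\eta$ and $\eta = \beta\tau$. This is the only place where the monoid (rather than semigroup) formulation of Green's relations is used, and it is precisely what makes the factorizations available without adjoining an external identity.

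Next I would fix an arbitrary $x\in X$ and evaluate these identities pointwise, obtaining $\tau(x) = \alpha(\eta(x))$ and $\eta(x) = \beta(\tau(x))$. The key observation is that $\alpha$ is a $G$-equivariant function sending the point $\eta(x)$ to the point $\tau(x)$; by part (i) of Lemma \ref{lema1}, the existence of such a function forces $G_{\eta(x)} \leq G_{\tau(x)}$. Symmetrically, $\beta$ is $G$-equivariant and sends $\tau(x)$ to $\eta(x)$, so $G_{\tau(x)} \leq G_{\eta(x)}$. If one prefers to avoid invoking Lemma \ref{lema1}, each containment can be checked directly from equivariance: for $g \in G_{\eta(x)}$ one has $g\cdot\tau(x) = g\cdot\alpha(\eta(x)) = \alpha(g\cdot\eta(x)) = \alpha(\eta(x)) = \tau(x)$, so that $g \in G_{\tau(x)}$, giving $G_{\eta(x)}\subseteq G_{\tau(x)}$, and the opposite containment follows by the same argument applied to $\beta$.

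Combining the two containments yields $G_{\tau(x)} = G_{\eta(x)}$, and since $x$ was arbitrary this holds for all $x\in X$, which is the claim. I do not expect a genuine obstacle in this argument; the only points requiring care are to confirm that the factorizations supplied by the $\mathcal{L}$-relation are honest $\EndG$-factorizations (guaranteed by $\id \in \EndG$), and to track the direction of the subgroup containment contributed by each equivariant factor so that the two inequalities combine into an actual equality of stabilizer subgroups, not merely an equality of their conjugacy classes.
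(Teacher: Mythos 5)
Your proposal is correct and follows essentially the same route as the paper's own proof: extract the factorizations $\tau=\alpha\eta$ and $\eta=\beta\tau$ from $S\tau=S\eta$, then obtain $G_{\eta(x)}\leq G_{\tau(x)}$ and $G_{\tau(x)}\leq G_{\eta(x)}$ from the $G$-equivariance of the factors (the paper invokes this monotonicity of stabilizers directly, which is exactly your Lemma~\ref{lema1}(i) step and your explicit computation). The only cosmetic difference is that your write-up spells out the role of the identity element and the pointwise equivariance check, which the paper leaves implicit.
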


\begin{proof}
Let $x\in X$ be given. Since $\tau \sim_{\mathcal{L}} \eta$, there exist $m_{1},m_{2} \in \EndG$ such that 
$$\tau=m_{1}\eta \quad \text{and} \quad \eta=m_{2}\tau.$$
As a consequence of the $G$-equivariance, it follows that
$$G_{x}\leq G_{\eta(x)} \leq G_{m_{1}\eta(x)}=G_{\tau(x)},$$
and
$$G_{x}\leq G_{\tau(x)} \leq G_{m_{2}\tau(x)}=G_{\eta(x)}.$$
Thus, we conclude that \( G_{\tau(x)}=G_{\eta(x)} \).
\end{proof}

\section{Preliminary results}
The monoid of $G$-equivariant functions is a submonoid of the full transformation monoid of the set $X$. As a submonoid, it inherits several important properties from the full transformation monoid, which are crucial for understanding its structure and behavior. In this section, we are to explore these inherited properties in detail, providing a comprehensive overview. Additionally, we shall illustrate these concepts through various examples to better demonstrate how they manifest, or do not, within the context of $G$-equivariant functions.

\begin{lema}
Let $f$ and $g$ be constant functions. Then, $f \sim_{\mathcal{L}} g$.
\end{lema}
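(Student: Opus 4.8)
The plan is to combine the characterization of constant $G$-equivariant functions with Lemma~\ref{lema1}. First I would recall, from the earlier Proposition on constant functions, that a constant map is $G$-equivariant exactly when its value is a global fixed point of the action; so write $f(t)=c$ and $g(t)=d$ with $G_{c}=G_{d}=G$. Writing $S=\EndG$, the relation $f\sim_{\mathcal{L}}g$ means $Sf=Sg$, which (since $S$ is a monoid) is equivalent to producing $m_{1},m_{2}\in S$ with $f=m_{1}g$ and $g=m_{2}f$. This is the form I would aim for directly.

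The key step is the observation that left-composing any equivariant map with a constant function again yields a constant: $(m\circ g)(t)=m(g(t))=m(d)$ for every $t\in X$. Hence finding $m_{1}$ with $m_{1}g=f$ reduces to finding an equivariant $m_{1}$ with $m_{1}(d)=c$, and symmetrically $m_{2}g=\ldots$ reduces to finding $m_{2}$ with $m_{2}(c)=d$. Now $G_{c}=G=G_{d}$ gives both $G_{d}\leq G_{c}$ and $G_{c}\leq G_{d}$, so Lemma~\ref{lema1}(i) supplies the required $m_{1}$ and $m_{2}$ in $\EndG$. Substituting back, $m_{1}g$ is the constant map of value $m_{1}(d)=c$, i.e.\ $m_{1}g=f$, and likewise $m_{2}f=g$, whence $f\sim_{\mathcal{L}}g$.

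I do not expect a genuine obstacle: the only facts to verify are that the composite of an equivariant map with a constant is again constant, and that Lemma~\ref{lema1}(i) applies under equality of stabilizers, both of which are immediate. The one point worth stating cleanly is that the case where $X$ has no global fixed point is vacuous (there are then no constant $G$-equivariant functions) and the single–fixed–point case is trivial, so the content lies entirely in the multiple–fixed–point situation handled above. As an alternative I would note a slightly more conceptual route: since $\{\,m(c)\mid m\in\EndG\,\}$ equals $\{\,y\in X\mid G_{c}\leq G_{y}\,\}$ by Lemma~\ref{lema1}(i), and this set is precisely the set of fixed points when $G_{c}=G$, the ideal $Sf$ is exactly the set of \emph{all} constant $G$-equivariant functions, independently of the chosen constant $f$; the equality $Sf=Sg$ then follows at once.
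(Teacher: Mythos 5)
Your proof is correct and takes essentially the same route as the paper: both arguments reduce the claim to producing a $G$-equivariant map $\sigma$ sending the constant value of $g$ to that of $f$, so that $f=\sigma g$ gives $Sf\subseteq Sg$, with the reverse containment by symmetry. You are in fact more careful than the paper's own proof, which asserts the existence of such a $\sigma$ without spelling out that it follows from $G_{g(t)}=G_{f(t)}=G$ (the proposition on constant functions) combined with Lemma~\ref{lema1}.
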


\begin{proof}
It is sufficient to prove that $f \in Sg$, where $S$ denotes the entire monoid $\EndG$, since $f \in Sg$ implies that $Sf \subseteq Sg$, and the reverse containment follows analogously. 

The inclusion holds due to the existence of a function $\sigma$ that maps $g(t)$ to $f(t)$, which implies that  
$$ f(t) = \sigma g(t). $$
\end{proof}

\begin{lema}
There exists only one element in the $\mathcal{R}$-class of a constant functions. 
\end{lema}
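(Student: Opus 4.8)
The plan is to exploit the fact, recorded earlier, that a constant $G$-equivariant function $f(t)=c$ necessarily satisfies $G_{c}=G$ and has singleton image $\operatorname{Im}(f)=\{c\}$. The whole argument hinges on computing the principal right ideal $fS$ directly, where $S=\EndG$, and then reading off the $\mathcal{R}$-class from it.

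First I would show that $fS=\{f\}$. For any $m\in S$ the composite $fm$ satisfies $(fm)(t)=f(m(t))=c$ for every $t\in X$, since $f$ is constant; equivalently, by the stated containment $\operatorname{Im}(fm)\subseteq\operatorname{Im}(f)=\{c\}$, the function $fm$ takes only the value $c$ and is therefore equal to $f$. Hence every element of the right ideal $fS$ coincides with $f$, so $fS=\{f\}$.

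Next I would translate this into a statement about the $\mathcal{R}$-class. By definition $g\sim_{\mathcal{R}}f$ means $gS=fS=\{f\}$. Because $S$ is a monoid, $g=g\,\id\in gS$, so $g=f$; conversely $f$ is trivially $\mathcal{R}$-related to itself. Thus the $\mathcal{R}$-class of $f$ is exactly $\{f\}$, a singleton.

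I do not expect a genuine obstacle here: the result is essentially forced by the fact that right multiplication cannot enlarge — indeed cannot change — a constant function. The only points requiring care are the composition convention, reading $fm$ as $t\mapsto f(m(t))$ consistently with Proposition \ref{esta}, and the use of the identity $\id\in S$ to guarantee $g\in gS$, which is precisely why working in the monoid rather than the bare semigroup is convenient.
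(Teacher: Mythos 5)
Your proposal is correct and follows essentially the same route as the paper: the core computation in both is that $fm(t)=f(m(t))=c=f(t)$ for every $m\in S$, so any element of $fS$ equals $f$, and hence any $g$ with $gS=fS$ must itself be $f$. Your version is in fact slightly more careful than the paper's, since you make explicit the use of the identity element to get $g\in gS$, a step the paper passes over silently.
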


\begin{proof}
Let $f$ be a constant function, and suppose that there exists $g$ such that $f\sim_{\mathcal{R}} g$. This does mean that there exists an element $m\in \EndG$ such that $g = fm$. Applying this function to an arbitrary element $t\in X$, we obtain:
\[
\begin{array}{rl} 
g(t) &= fm(t) \\ 
     &= f(t), \quad \forall t\in X, 
\end{array}
\]
hence, $f = g$.
\end{proof}

For a $G$-equivariant function $f$, its image $Im(f)$ is a $G$-invariant set, this means that $g\cdot y \in Im(f)$, for all $g\in G$ and for all $y\in Im(f)$, or equivalently $Gy\subseteq Im(f)$, for every element $y\in Im(f)$. 

\begin{lema}
For a $G$-equivariant injective function from a $G$-invariant subset of $X$ to $X$, it is possible to extend this function to a bijective $G$-equivariant function in $\EndG$. 
\end{lema}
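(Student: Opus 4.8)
The plan is to reduce the extension problem to matching the $G$-invariant complements $X\setminus Y$ and $X\setminus f(Y)$, and then to glue the given map with an equivariant bijection between those complements. Here $Y$ denotes the $G$-invariant subset and $f\colon Y\to X$ the given injective $G$-equivariant function.

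First I would show that injectivity together with the finiteness of $X$ forces $f$ to preserve stabilizers \emph{exactly}, i.e. $G_x=G_{f(x)}$ for every $x\in Y$. The inclusion $G_x\leq G_{f(x)}$ is immediate from equivariance: if $g\cdot x=x$, then $g\cdot f(x)=f(g\cdot x)=f(x)$. For the reverse inclusion, note that $f$ maps the orbit $Gx$ onto $Gf(x)$, since $f(Gx)=\{g\cdot f(x)\mid g\in G\}=Gf(x)$; injectivity then gives $|Gx|=|Gf(x)|$, that is $[G:G_x]=[G:G_{f(x)}]$, and because $X$ is finite these indices are finite, so $|G_x|=|G_{f(x)}|$. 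Combined with $G_x\leq G_{f(x)}$ this yields equality. This is the step where finiteness is genuinely used.

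From $G_x=G_{f(x)}$ I conclude that $f$ carries each orbit $Gx\subseteq Y$ to an orbit $Gf(x)\subseteq f(Y)$ of exactly the same type $[G_x]$, and that the induced map on orbit sets $Y/G\to f(Y)/G$ is a bijection. In particular, for every conjugacy class $[H]$ the number of orbits of type $[H]$ contained in $Y$ equals the number contained in $f(Y)$. Subtracting these counts from the corresponding counts in $X$, the $G$-invariant complements $X\setminus Y$ and $X\setminus f(Y)$ contain the same number of orbits of each type $[H]$. Two finite $G$-sets with equal orbit-type multiplicities are $G$-isomorphic: matching the orbits pairwise by type, one picks representatives $a,b$ with $G_a=G_b=H$ and invokes Lemma \ref{lema1}(ii) to obtain a bijective equivariant map sending one orbit onto the other, then takes the disjoint union over all matched pairs. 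This produces a $G$-equivariant bijection $\phi\colon X\setminus Y\to X\setminus f(Y)$.

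Finally I would define $\sigma\colon X\to X$ by $\sigma|_{Y}=f$ and $\sigma|_{X\setminus Y}=\phi$. Since $Y$ and $X\setminus Y$ are $G$-invariant and each piece is equivariant, $\sigma$ is $G$-equivariant; since $f\colon Y\to f(Y)$ and $\phi\colon X\setminus Y\to X\setminus f(Y)$ are bijections onto complementary subsets whose union is $X$, the map $\sigma$ is a bijection. Hence $\sigma\in\AutG\subseteq\EndG$ extends $f$. The main obstacle is the first step, upgrading $G_x\leq G_{f(x)}$ to equality via injectivity and finiteness, as everything afterwards is the routine $G$-set bookkeeping that the exact stabilizer match makes available; the one point requiring care is that the complements must be matched orbit-type by orbit-type, not merely by cardinality.
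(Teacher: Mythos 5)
Your proof is correct and follows essentially the same route as the paper's: both decompose the problem by stabilizer type, match the orbits missing from the domain with the orbits missing from the image within each type, build an equivariant bijection between these complements via orbit representatives with equal stabilizers, and glue the result with $f$. In fact you are more careful than the paper, which takes for granted that $f$ preserves orbit types; the only nitpick is that when $G$ is infinite the step ``$|G_x|=|G_{f(x)}|$ hence equality'' should be replaced by the index argument: $G_x\leq G_{f(x)}$ and $[G:G_x]=[G:G_{f(x)}]<\infty$ force $[G_{f(x)}:G_x]=1$.
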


\begin{proof}
Let be $f$ this $G$-equivariant function. It is not difficult to verify that $Im(f)$ is also a $G$-invariant subset $Y$ of $X$. If we consider a set of representatives of the orbits in each box $\Hbox$, let denote it $X_{[H]}$, we can take this set as $G_{x}=H$ for all $x\in X_{[H]}$. The restriction of $f$ to $\Hbox$ keeps being $G$-equivariant and injective functions, hence there exist bijections from $\Hbox\setminus Y$ to $\Hbox\setminus Im(f)$, moreover there exists bijections in the sets of representatives for the orbits on $\Hbox$, let call $f_{[H]}: X_{[H]} \cap (\Hbox\setminus Y) \longrightarrow X_{[H]}\cap (\Hbox\setminus Im(f))$ which extends the restriction of $f$ to the whole box $\Hbox$ as
$$\widehat{f_{[H]}}(t)=\left\{\begin{array}{cc}
f(z) & if\ z\in \Hbox \cap Y\\
 & \\
g\cdot f_{[H]}(t) &  \begin{array}{c} if\ z=g\cdot t,\\
and\ t\in X_{[H]}\cap(\Hbox \setminus Y).\end{array}  
\end{array} \right.$$
The function $\widehat{f_{[H]}}$ is a $G$-equivariant and bijective functions. A function define as
the overlaping of theses functions, $$\widehat f (t):= \widehat{f_{[H]}}(t), \mbox{ for } t\in \Hbox$$
is a bijective $G$-equivariant function which extends $f$ to $X$. 
\end{proof}

\begin{teorema}\label{L-ker}
Given two functions $f,g\in \EndG$, then $f\sim_{\mathcal{L}}g$ if and only if $ker(f)=ker(g)$. 
\end{teorema}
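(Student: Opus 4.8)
The plan is to mirror the classical characterisation of the $\mathcal{L}$-relation in the full transformation monoid, where—under the convention that $fg$ denotes $f\circ g$, so that left multiplication factors a function through its kernel—one shows that $Sf=Sg$ is governed entirely by the partition induced on $X$. The forward implication is the routine direction and uses only the semigroup structure together with the bulleted property $\ker(g)\subseteq\ker(fg)$ recalled before the statement. Assuming $f\sim_{\mathcal{L}}g$, there exist $m_{1},m_{2}\in S=\EndG$ with $f=m_{1}g$ and $g=m_{2}f$; then $\ker(g)\subseteq\ker(m_{1}g)=\ker(f)$ and symmetrically $\ker(f)\subseteq\ker(g)$, whence $\ker(f)=\ker(g)$.

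The substance lies in the converse, so assuming $\ker(f)=\ker(g)$ I would construct a $G$-equivariant $s$ with $sg=f$. First I would define $\phi:\operatorname{Im}(g)\to X$ by $\phi(g(x)):=f(x)$. Well-definedness is exactly the inclusion $\ker(g)\subseteq\ker(f)$: if $g(x)=g(y)$ then $(x,y)\in\ker(g)=\ker(f)$, so $f(x)=f(y)$. The key verification is that $\phi$ is $G$-equivariant on the $G$-invariant set $\operatorname{Im}(g)$, which follows by chaining the equivariance of $g$ and of $f$: for $h\in G$ one has $\phi(h\cdot g(x))=\phi(g(h\cdot x))=f(h\cdot x)=h\cdot f(x)=h\cdot\phi(g(x))$.

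Finally I would extend $\phi$ to all of $X$. Since $\operatorname{Im}(g)$ is $G$-invariant, so is its complement $X\setminus\operatorname{Im}(g)$; setting $s:=\phi$ on $\operatorname{Im}(g)$ and $s:=\id$ on the complement yields a well-defined $s\in\EndG$, since each piece is $G$-equivariant and $G$ preserves each piece, and by construction $sg=f$. Hence $f\in Sg$ and $Sf\subseteq Sg$. Exchanging the roles of $f$ and $g$—which is legitimate because the hypothesis $\ker(f)=\ker(g)$ is symmetric—produces $s'\in\EndG$ with $s'f=g$, giving $Sg\subseteq Sf$ and therefore $f\sim_{\mathcal{L}}g$.

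I expect the main obstacle to be precisely the equivariance and extension of $\phi$: one must confirm that $\operatorname{Im}(g)$ is a genuine $G$-set (its $G$-invariance is noted before the statement) so that the equivariance computation is meaningful, that the two equivariances of $f$ and $g$ combine in the correct order, and that the extension off $\operatorname{Im}(g)$ stays inside $\EndG$. Any $G$-equivariant choice on the complement works—Lemma~\ref{lema1}(i) guarantees such choices exist—but taking the identity is the cleanest, since $G_{t}\leq G_{t}$ holds trivially and no compatibility condition between the two pieces is required.
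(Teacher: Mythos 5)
Your proof is correct, and the forward direction is identical to the paper's (both use the inclusion $\ker(g)\subseteq\ker(fg)$ and the two factorizations coming from $Sf=Sg$). The converse also starts from the same core construction—the map sending $g(x)$ to $f(x)$, well-defined by the kernel hypothesis and $G$-equivariant by chaining the equivariances of $f$ and $g$—but you finish differently. The paper observes that its map $\overline{\varphi}(f(t))=g(t)$ is a \emph{bijection} from $\operatorname{Im}(f)$ onto $\operatorname{Im}(g)$ (injectivity uses the full equality $\ker(f)=\ker(g)$) and then invokes its earlier extension lemma to prolong $\overline{\varphi}$ to a $G$-equivariant bijection $\varphi\colon X\to X$, so that $g=\varphi f$ with $\varphi$ a unit of the monoid. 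You instead extend your $\phi$ by the identity on the $G$-invariant complement $X\setminus\operatorname{Im}(g)$, producing a witness $s\in\EndG$ that need not be bijective. Your route is more self-contained and elementary: it bypasses the extension lemma entirely (whose proof requires choosing orbit representatives box by box and, in effect, finiteness of $X$ to match up complements), and in fact your argument works verbatim for infinite $G$-sets. What the paper's route buys in exchange is a strictly stronger structural conclusion: two $\mathcal{L}$-related elements of $\EndG$ differ by left multiplication by an element of $\AutG$, the analogue of the classical fact for the full transformation monoid. For the theorem as stated, either finish suffices.
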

\begin{proof}
Whether $f\sim_{\mathcal{L}} g$, then there exist elements $m_{1},m_{2}\in \EndG$ such that 
$$f=m_{1}g,$$
$$g=m_{2}f.$$
Owing to the properties of the kernels, it follows that
$$ker(g)\subseteq ker(m_{1}g)=ker(f),$$
and $$ker(f)\subseteq ker(m_{2}f)=ker(g),$$
which means that $ker(f)=ker(g)$. \\

On the other hand, if $ker(f)=ker(g)$, this implies that $|Im(f)|=|Im(g)|$, then there exists a bijective function 
$$\overline \varphi: Im(f)\subseteq X \longrightarrow Im(g)\subseteq X,$$
given by $\overline \varphi(f(t))=g(t)$. It follows that 
$$\overline \varphi (h\cdot f(t))=\overline \varphi (f(h\cdot t))=g(h\cdot t)=h\cdot g(t)= h\cdot \overline \varphi(f(t)),$$
meaning that $\overline \varphi$ is a $G$-equivariant functions, this also implies that $\overline \varphi$ can be extended to a $G$-equivariant bijection $\varphi: X \rightarrow X$. It is easy to see that $$g=\varphi f,$$
implying that $g\in Sf$, and $Sg\subseteq Sf$. An analogous argument proves the other containment, and hence, the equality. 
\end{proof}

\begin{lema}
Given two functions $f,g\in \EndG$such that $f\sim_{\mathcal{R}}g$, then $Im(f)=Im(g)$. 
\end{lema}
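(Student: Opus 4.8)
The plan is to mirror the forward direction of Theorem \ref{L-ker}, transporting the argument from kernels to images and from $\mathcal{L}$ to $\mathcal{R}$. First I would unpack the hypothesis $f\sim_{\mathcal{R}}g$: by definition this means $f\EndG=g\EndG$. Since $\EndG$ is a monoid, it contains the identity $\id$, so $f=f\,\id\in f\EndG=g\EndG$; hence there is some $m_{1}\in\EndG$ with $f=gm_{1}$. By the symmetric argument there is $m_{2}\in\EndG$ with $g=fm_{2}$.

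Then I would invoke the elementary image-containment property recalled just before Theorem \ref{L-ker}, namely $Im(fg)\subseteq Im(f)$. Applying it to the two factorizations gives
\[
Im(f)=Im(gm_{1})\subseteq Im(g), \qquad Im(g)=Im(fm_{2})\subseteq Im(f),
\]
and the two inclusions together yield $Im(f)=Im(g)$.

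I do not expect any genuine obstacle here: this is the ``easy'' necessity direction, requiring only the definition of $\sim_{\mathcal{R}}$, the presence of the monoid identity, and the image-containment property, and it uses nothing special about $G$-equivariance (the same computation is valid in any transformation monoid). The one point demanding a little care is bookkeeping the composition convention, i.e.\ checking that it is the \emph{right} ideal $f\EndG$ that produces the factorizations $f=gm_{1}$, $g=fm_{2}$ (with the connecting maps composed on the right, rather than $f=m_{1}g$), so that the invariant attached to $\mathcal{R}$ is the image rather than the kernel. It is worth flagging that the lemma is deliberately one-directional: the converse would require, from $Im(f)=Im(g)$, the construction of a $G$-equivariant witness $m$ with $f=gm$, which is precisely the subtle step that forced the use of the extension lemma in the converse half of Theorem \ref{L-ker}, and so equality of images alone should not be expected to characterize $\sim_{\mathcal{R}}$ in the equivariant setting without additional information.
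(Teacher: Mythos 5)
Your proof is correct and follows essentially the same route as the paper's: extract the factorizations $f=gm_{1}$ and $g=fm_{2}$ from $f\EndG=g\EndG$, then apply $Im(fg)\subseteq Im(f)$ in both directions to get the two inclusions. Your added remarks on the role of the monoid identity and on why the converse fails (matching the paper's subsequent remark and example) are accurate but not needed for the proof itself.
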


\begin{proof}
If $f\sim_{\mathcal{R}} g$, then there exist elements $m_{1},m_{2}\in \EndG$ such that 
$$f=gm_{1},$$
$$g=fm_{2}.$$
Because of the properties of the Image, it follows that
$$Im(f)=Im(gm_{1})\subseteq Im(g),$$
and $$Im(g)=Im(fm_{2})\subseteq Im(f),$$
which implies that $Im(f)=Im(g)$.
\end{proof}

\begin{observacion}
In the full transformation monoid, this property is an equivalence; nevertheless, in $\EndG$, there exist elements with the same image which are not related with the $\mathcal{R}$ relation.
\end{observacion}

\begin{ejemplo}
For the same $G$-set in the example \ref{ejemplo1}, the functions in the figure acquires the same image, nonetheless, not all of them are $\mathcal{R}$-related.  
\begin{figure}[ht]
\centering
\includegraphics[width=2.5in]{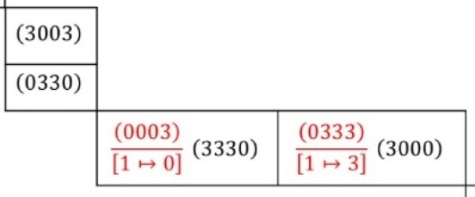}
\caption{Functions with the same image.}
\end{figure}

\end{ejemplo}

The kernel of a $G$-equivariant functions shall play a fundamental role in the results we want to accomplish, as a result, we do mention some elemental properties of them:

\begin{lema}\label{truco}
Let be $\tau\in \EndG$, then it holds that:
\begin{enumerate}
\item[i)] If $\tau(x)=\tau(y)$, then
$$\{(g\cdot x, g\cdot y),(g\cdot y, g\cdot x)|\ g\in G\}\subseteq ker(\tau)$$
\item[ii)] For every $x\in X$, it follows that$$ \{(g\cdot x, h\cdot x)|\ h^{-1}g\in G_{\tau(x)}\}\subseteq ker(\tau).$$

\end{enumerate}
\end{lema}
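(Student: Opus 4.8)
The plan is to prove both statements by direct application of the $G$-equivariance identity $\tau(g\cdot z)=g\cdot\tau(z)$, together with the fact that $ker(\tau)$ is a symmetric relation (indeed an equivalence relation, as recorded earlier in the paper). No machinery beyond the definition of the action and of the stabilizer is required.

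For part i), I would start from the hypothesis $\tau(x)=\tau(y)$ and act by an arbitrary $g\in G$. Since the action of $g$ is a well-defined map, $g\cdot\tau(x)=g\cdot\tau(y)$; rewriting both sides with equivariance gives $\tau(g\cdot x)=g\cdot\tau(x)=g\cdot\tau(y)=\tau(g\cdot y)$, so $(g\cdot x,g\cdot y)\in ker(\tau)$. The reversed pair $(g\cdot y,g\cdot x)$ then lies in $ker(\tau)$ because the kernel of any function is symmetric. As $g$ ranges over $G$, the entire displayed set is obtained, proving the inclusion.

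For part ii), I would fix $x\in X$ and take any $g,h\in G$ with $h^{-1}g\in G_{\tau(x)}$. Applying equivariance to both $g\cdot x$ and $h\cdot x$ reduces the desired equality $\tau(g\cdot x)=\tau(h\cdot x)$ to $g\cdot\tau(x)=h\cdot\tau(x)$. Acting by $h^{-1}$ and using the axioms $e\cdot z=z$ and $(ab)\cdot z=a\cdot(b\cdot z)$, this is in turn equivalent to $(h^{-1}g)\cdot\tau(x)=\tau(x)$, which says exactly that $h^{-1}g$ fixes $\tau(x)$, i.e.\ $h^{-1}g\in G_{\tau(x)}$ --- precisely the hypothesis. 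Hence $(g\cdot x,h\cdot x)\in ker(\tau)$.

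Both arguments are one-line computations, so there is no genuine obstacle here; the only point requiring minor care is the bookkeeping in part ii), where one must correctly pass between the equality $g\cdot\tau(x)=h\cdot\tau(x)$ and the membership $h^{-1}g\in G_{\tau(x)}$ by using that the $G$-action is by invertible maps. Part i) additionally relies, implicitly, on the symmetry of $ker(\tau)$ to obtain the second family of pairs without extra work.
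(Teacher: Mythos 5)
Your proof is correct and follows essentially the same route as the paper's: both parts are direct applications of $G$-equivariance, with part i) obtained by acting with an arbitrary $g$ and invoking symmetry of the kernel, and part ii) by translating $h^{-1}g\in G_{\tau(x)}$ into $g\cdot\tau(x)=h\cdot\tau(x)$ and then into $\tau(g\cdot x)=\tau(h\cdot x)$. The only cosmetic difference is that you run the chain in part ii) backwards (reducing the goal to the hypothesis via equivalences) while the paper runs it forwards; the content is identical.
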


\begin{proof}
i) Let us suppose that $\tau(x) = \tau(y)$. Due to the $G$-equivariance, it follows that
$$g \cdot \tau(x) = g \cdot \tau(y) \implies \tau(g \cdot x) = \tau(g \cdot y) \implies (g \cdot x, g \cdot y) \in \ker(\tau).$$
The rest is due to the symmetry of the equalities. \\

ii) If $h^{-1}g \in G_{\tau(x)}$, then
$$h^{-1}g \cdot \tau(x) = \tau(x) \implies g \cdot \tau(x) = h \cdot \tau(x) \implies \tau(g \cdot x) = \tau(h \cdot x) \implies (g \cdot x, h \cdot x) \in \ker(\tau).$$
\end{proof}

The next step is to explore a special type of $G$-equivariant functions, which plays a crucial role within the monoid.

\section{Elementary collapsings}
Elementary collapsings are particular elements of the monoid of $G$-equivariant transformations, characterized by their essential role in any generating set of the monoid. The way in which these elements generate the monoid modulo its group of units can be found in \cite{paper}, but their properties have not been studied in depth in that document. We dedicate this section to listing and proving some of their most relevant properties for their application in this work.\\

Given  a subgroup $ H\leq G $  and a subset $ N\subseteq G$, we define the $ N $-conjugacy class of $ H $  as:
$$  [H]_{N}:=\{n^{-1}Hn:\ n\in N\}. $$ 
It is easy to see that $ [H]_{N} \subseteq [H] $, meaning that  the elements in an $ N $-conjugacy class of $ H $ are some of the conjugate subgroups of $ H $, specifically those given by conjugating elements in $ N $. Denote the normalizer of a subgroup $ H $  simply as $ N_{G}(H)=N_{H} $.\\

\begin{definicion}{(Elementary collapsing of type $(H,[K]_{N_{H}})$.)}\\
A $G$-equivariant function $\tau \in \EndG$ is said to be an elementary collapsing of type $(H,[K]_{N_{H}})$, whether there exist elements $x,y\in X$ satisfying the following conditions:

\begin{enumerate}
\item[i)] $Gx \neq Gy$.
\item[ii)] $G_{x}=H$.
\item[iii)] $[G_{y}]_{N_{H}}=[G_{\tau(x)}]_{N_{H}}=[K]_{N_{H}}$.
\item[iv)] 
\begin{scriptsize}
$ker(\tau)=\{(a,a)|a\in X\}\cup \{(g\cdot x,g\cdot y),(g\cdot y,g\cdot x)|g\in G\}\cup \{(g\cdot x,h\cdot x),(h\cdot x, g\cdot x)|h^{-1}g\in G_{y}\}$
\end{scriptsize}
\end{enumerate}
\end{definicion}

\begin{ejemplo}
For the $G$-set of the example \ref{ejemplo1}, the functions $\tau=(3000)$ is an elementary collapsing of type $(\langle \overline 0 \rangle, [\mathbb{Z}_{2}]_{N_{\langle 0 \rangle}})$.\\

Consider $x=1$ and $y=0$, it's easy to see that $Gx\neq Gy$. As well, is not difficult to verify that $G_{x}= \langle \overline 0 \rangle$ and $[G_{y}]_{N_{H}}=[\mathbb{Z}]_{N_{H}}$. Aside, if we compute the kernel of this functions we get
$$ker((3000))=\{(0,0),(1,1),(2,2),(3,3)\}\cup \{(1,0),(0,1),(2,0),(0,2)\}\cup\{(1,2),(2,1)\}.$$
The first part represents the reflexive part (for the equivalence relation), the secord part describes the only two different orbits that collapse to only one orbit, and the third parts characterize the elements that overlap in the orbit of $x$ when mapped to a smaller orbit.

\end{ejemplo}

The following provides an equivalent definition of elementary collapsing. However, the issue with considering elementary collapsings in this manner is that the type of collapsing is overlooked, which is an important characteristic of these functions.

\begin{proposicion}\label{resulto importante}
Given $\tau\in \EndG$, it happens that $\tau$ is an elementary collapsing of type $(G_{x},[G_{y}]_{N_{G_{x}}})$ if and only if the following statements hold:

\begin{enumerate}
\item[i)] The restriction of $\tau$ to the set $X \setminus Gx$ is an injective function. 
\item[ii)] There exists an element $z\in X$ such that $Im(\tau)=X\setminus Gz$, and $G_{x}=G_{z}$.
\end{enumerate}
\end{proposicion}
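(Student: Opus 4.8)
The plan is to prove both implications, extracting conditions i) and ii) from the kernel formula in the forward direction and reconstructing the kernel explicitly in the converse. Throughout I write $H=G_x$ and abbreviate $g\cdot t$ by $gt$. For the forward direction, assume $\tau$ is an elementary collapsing with witnesses $x,y$. First I would set $g=e$ in condition iv) of the definition to get $(x,y)\in\ker(\tau)$, hence $\tau(x)=\tau(y)=:c$. To obtain condition i) of the proposition I argue directly from the kernel: if $(a,b)\in\ker(\tau)$ with $a,b\in X\setminus Gx$, then $(a,b)$ cannot lie in either non-diagonal piece of $\ker(\tau)$, since each such pair has at least one coordinate in $Gx$ (note $Gx\cap Gy=\emptyset$ because $Gx\neq Gy$). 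Thus $a=b$, so $\tau$ is injective on $X\setminus Gx$.

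For condition ii) I would run an orbit-counting argument, which is the heart of the matter. Let $O_1=Gx,O_2,\dots,O_m$ be the orbits of $X$. For each $i\geq 2$ the restriction $\tau|_{O_i}$ is a $G$-equivariant injection of the orbit $O_i$ onto the orbit $\tau(O_i)$, hence a bijection; since $G_a\leq G_{\tau(a)}$ always holds, equal orbit sizes force $G_a=G_{\tau(a)}$, so $\tau$ preserves the stabilizer type on each $O_i$ with $i\geq 2$. By injectivity off $Gx$, the assignment $i\mapsto\tau(O_i)$ is injective on $\{2,\dots,m\}$, and since $Gy=O_j$ for some $j\geq 2$ with $\tau(O_j)=Gc=\tau(O_1)$, the orbits of $\mathrm{Im}(\tau)$ (which is $G$-invariant) are exactly the $m-1$ distinct orbits $\tau(O_2),\dots,\tau(O_m)$. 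Counting orbits of each stabilizer type $[L]$, the image contains one orbit of type $[L]$ for every orbit of $X$ of type $[L]$, except that it is missing exactly one orbit, of type $[G_x]$ (the type lost by $O_1$). Hence $X\setminus\mathrm{Im}(\tau)$ is a single orbit $Gz_0$ with $[G_{z_0}]=[G_x]$; replacing $z_0$ by a conjugate $z=gz_0$ with $G_z=gG_{z_0}g^{-1}=G_x$ (which exists because $G_x$ is conjugate to $G_{z_0}$) gives condition ii).

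For the converse, assume i) and ii). Since $G_z=G_x$ gives $|Gz|=|Gx|$, the equality $|\mathrm{Im}(\tau)|=|X|-|Gz|=|X\setminus Gx|$ together with injectivity of $\tau$ on $X\setminus Gx$ forces $\mathrm{Im}(\tau)=\tau(X\setminus Gx)$. Thus $\tau(x)=c$ equals $\tau(w)$ for a unique $w\in X\setminus Gx$; I set $y$ to be the unique point of $O'=Gw$ with $\tau(y)=c$ and take $H=G_x$. As before $\tau|_{O'}$ is a stabilizer-preserving bijection, so $G_y=G_c=G_{\tau(x)}$, which yields conditions i)--iii) of the definition (with $[K]=[G_y]$). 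It remains to check condition iv). The inclusion of the prescribed set into $\ker(\tau)$ is immediate from Lemma \ref{truco} once $G_y=G_{\tau(x)}$ is known. For the reverse inclusion I take $(a,b)\in\ker(\tau)$ with $a\neq b$ and split into cases: both coordinates in $Gx$ forces $a=gx,\ b=hx$ with $h^{-1}g\in G_{\tau(x)}=G_y$; one coordinate in $Gx$ and the other in $X\setminus Gx$ forces the pair to be $(gx,gy)$ or its transpose, using injectivity off $Gx$ together with $\tau(gy)=gc$. This reproduces condition iv) exactly.

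The main obstacle is the orbit-counting step in the forward direction: converting ``injective off $Gx$, collapsing $Gx$ onto another orbit'' into the sharp statement that precisely one orbit, of stabilizer type $[G_x]$, is absent from the image. The two facts that make this work are that $\tau$ preserves stabilizer type on every orbit other than $Gx$, and that the only coincidence among image orbits arises from $\tau(Gx)=\tau(Gy)$; finiteness of $X$ is what makes the counting legitimate. A secondary subtlety worth care is the passage from $[G_{z_0}]=[G_x]$ to an actual representative $z$ with $G_z=G_x$, which relies on every conjugate of $G_{z_0}$ occurring as a stabilizer within the orbit $Gz_0$.
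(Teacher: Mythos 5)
Your proof is correct and follows essentially the same strategy as the paper's: injectivity off $Gx$ read off from the kernel, a counting argument showing that exactly one orbit, of stabilizer type $[G_x]$, is absent from the image (followed by conjugating the representative to get $G_z = G_x$), and for the converse a cardinality/bijectivity argument plus explicit reconstruction of the kernel. If anything, your write-up is more complete than the paper's: your global count of orbits organized by stabilizer type also establishes that the image covers every orbit outside $\mathcal{B}_{[G_x]}$ (the paper argues only inside that box and leaves the rest implicit), and in the converse you verify both inclusions of the kernel equality, whereas the paper checks only that the prescribed pairs lie in $\ker(\tau)$.
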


\begin{proof}
Consider $\tau$ an elementary collapsing of type $(G_{x},[G_{y}]_{N_{G_{
x}}})$. For i), let $a,b\in X\setminus Gx$ be such that $\tau (a)= \tau (b)$, this means that $(a,b)\in ker(\tau)$. As $a,b\notin Gx$, this implies that $(a,b)\notin \{(g\cdot x,g\cdot y),(g\cdot y,g\cdot x)|\ g\in G\} \cup \{(g\cdot x,h\cdot x),(h\cdot x,g\cdot x)|\ h^{-1}g\in G_{y}\}$, as $\tau$ is an elementary collapsing, then $(a,b)\in \{(a,a)|\ a\in X\}$, and thus $a=b$.\\

For ii), the restriction of $\tau$ to $\mathcal{B}_{[G_{x}]}\setminus Gx$ is an injective function, and $G$-equivariance implies that $\tau(\mathcal{B}_{[G_{x}]}\setminus Gx) \subsetneq \mathcal{B}_{[G_{x}]}$. For any two elements $t_{1},t_{2}\in \mathcal{B}_{[G_{x}]} \setminus \tau(\mathcal{B}_{[G_{x}]}\setminus Gx)$, if $Gt_{1}\neq Gt_{2}$, this would imply that there exist elements $z_{1},z_{2}\in X$ such that $\tau(Gz_{1}))=\tau(Gt_{1})$ and $\tau(Gz_{2})=\tau(Gt_{2})$, which, even if $z_{1}=z_{2}$, would mean a contradiction to the kernel of $\tau$, because it is an elementary collapsing.  Therefore, $Gt_{1}=Gt_{2}$, implying that there is only one orbit in $\mathcal{B}_{[G_{x}]}\setminus \tau(\mathcal{B}_{G_{x}}\setminus Gx)$. We consider $z\in X$ a representative of this orbit and the statement holds. As $Gz \subseteq \mathcal{B}_{G_{x}}$, it follows that $G_{x} \sim_{G} G_{z}$, we can consider $z'=g\cdot z$ to find an element in the same orbit as $z$ with the same stabilizer as $x$. \\

If $\tau$ is a function that satisfies i) and ii), $\tau(x)\in Im(\tau)$, implies that $\tau(x)\neq g\cdot z$, for all $g\in G$. The restriction of $\tau$ to $X\setminus Gx$ is a bijective function with the set $X\setminus Gz$, therefore, there exists an element $y\in X\setminus Gx$ such that $\tau(y)=\tau(x)$.  As the restriction is injective, we can assure that $G_{y}=G_{\tau(x)}$, and in consequence $[G_{y}]_{N_{G_{x}}}=[G_{\tau(x)}]_{N_{G_{x}}}$. Now, if we consider elements $g,h\in G$ such that $h^{-1}g\in G_{y}$, it follows that:
$$g\cdot y= h\cdot y \implies \tau(g\cdot y)=\tau(h\cdot y)\implies g\cdot \tau(x)=h\cdot \tau( x ) \implies (g\cdot x,h\cdot x)\in \ker(\tau).$$
Then $\tau$ is an elementary collapsing of type $(G_{x},[G_{y}]_{N_{G_{x}}})$. 
\end{proof}

Now we can characterize elemental collapsings as "almost" injective functions, which only have one orbit that overlaps with another. However, this implies the existence of a second orbit that is not part of the image. The issue with this characterization of elemental collapsings is that it does not provide information about the type of collapsing directly. For the purposes of this work, this characterization will be useful in proving certain results. However, for example, in the context addressed in \cite{paper}, it is not useful, as the collapsing type is an indispensable parameter to generate the generating sets of $\EndG$.\\

With these properties, for an elementary collapsing $\tau$ we can define a function
$$\widehat \tau(t):= \left\{\begin{array}{cc}g\cdot z & if\ t=g\cdot x \\ \tau(t) & otherwise  \end{array} \right.,$$
which is a bijective $G$-equivariant functions. We call this function the \emph{bijective support} of the elementary collapsing. We shall notice that many elementary collapsings may have the same bijective support. 

\begin{proposicion}
A constant function $f(t)=c$ is an elementary collapsing of some type if and only if $|X/G|=2$. 
\end{proposicion}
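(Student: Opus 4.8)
The plan is to reduce both implications to the characterization of elementary collapsings given in Proposition \ref{resulto importante}, rather than unwinding the kernel condition of the original definition. The starting observation, which I would state first, is that since $f(t)=c$ is assumed to be $G$-equivariant, the earlier Proposition on constant functions forces $G_{c}=G$; hence $Gc=\{c\}$ is a singleton orbit, and moreover $\operatorname{Im}(f)=\{c\}$.

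For the forward direction, suppose $f$ is an elementary collapsing of some type. By condition (ii) of Proposition \ref{resulto importante} there exists $z\in X$ with $\operatorname{Im}(f)=X\setminus Gz$. Since $\operatorname{Im}(f)=\{c\}$, this gives $Gz=X\setminus\{c\}$, so $Gz$ is a single orbit whose complement is exactly the fixed point $c$. As $c\notin Gz$ and $Gc=\{c\}$, the set $X$ decomposes as the disjoint union $\{c\}\sqcup Gz$ of exactly two orbits, whence $|X/G|=2$.

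For the converse, suppose $|X/G|=2$. Because $Gc=\{c\}$ is one of the two orbits, the other orbit is $O:=X\setminus\{c\}$; pick any $x\in O$, so that $Gx=O$. I would then verify the two conditions of Proposition \ref{resulto importante}. Condition (i) is immediate: $X\setminus Gx=\{c\}$ is a single point, so the restriction of $f$ to it is trivially injective. For condition (ii), take $z=x$; then $\operatorname{Im}(f)=\{c\}=X\setminus O=X\setminus Gz$ and $G_{z}=G_{x}$, as required. Proposition \ref{resulto importante} then yields that $f$ is an elementary collapsing, of type $(G_{x},[G]_{N_{G_{x}}})$, since the overlapping element in $X\setminus Gx$ must be the fixed point $c$ with $G_{c}=G$.

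The arguments are short once the right tool is chosen, so I expect no serious obstacle; the one point demanding care is the very first step, namely invoking $G$-equivariance of the constant function to conclude $G_{c}=G$ and hence that $\{c\}$ is genuinely a singleton orbit. Everything else follows from matching $\operatorname{Im}(f)=\{c\}$ against the form $X\setminus Gz$ prescribed by the characterization. If one instead preferred to argue directly from the original definition, the subtle check would be that $[G_{y}]_{N_{H}}=[G]_{N_{H}}$ forces $G_{y}=G$, so that the second collapsed orbit is itself a fixed point; the characterization route bypasses this bookkeeping.
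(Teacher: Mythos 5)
Your proof is correct, but it takes a genuinely different route from the paper. The paper argues directly from the kernel condition in the definition of elementary collapsing: for the direction $|X/G|=2 \Rightarrow$ collapsing, it verifies by hand that $\ker(f)$ has exactly the prescribed form (a case analysis on pairs $(a,b)\in\ker(f)$), and for the other direction it argues by contrapositive, splitting into the cases $|X/G|=1$ (where $f$ would be bijective) and $|X/G|>2$ (where a third orbit produces forbidden pairs in $\ker(f)$). You instead route everything through the characterization of Proposition \ref{resulto importante}: since $G$-equivariance of a constant function forces $G_{c}=G$, hence $Gc=\{c\}$ and $\operatorname{Im}(f)=\{c\}$, condition (ii) of that proposition ($\operatorname{Im}(f)=X\setminus Gz$) immediately pins down $X=\{c\}\sqcup Gz$ as a union of exactly two orbits, and conversely, when $|X/G|=2$, both conditions (i) and (ii) are trivial to check because $X\setminus Gx=\{c\}$ is a single point. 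Your argument is shorter and avoids all kernel bookkeeping, including the contrapositive case split; what it costs is reliance on the heavier tool \ref{resulto importante} (which the paper has, however, already established), whereas the paper's proof is self-contained from the definition and makes explicit why a constant function collapses exactly two orbits and no more. Both directions of your argument are sound, and your identification of the type $(G_{x},[G]_{N_{G_{x}}})$ agrees with what the paper's construction (taking $y=c$) produces.
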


\begin{proof}
Let us suppose that $|X/G|=2$. As $f$ is a constant function, because of the $G$-equivariance, $G_{f(t)}=G$, and by the orbit-stabilizer theorem, $Gf(t)$ has only one element. therefore, there exists another element $x\in X$ such that $Gx\neq Gc$. We shall prove that 
$$ker(f)=\{(a,a)|\ a\in X\}\cup \{(g\cdot x,g\cdot y),(g\cdot y, g\cdot x)|\ g\in G\} \cup \{(g\cdot x, h\cdot x)|\ h^{-1}g\in G_{y}\}.$$ 
And $f$ shall be an elementary collapsing of type $(G_{x},[G_{c}]_{N_{G_{x}}})$, considering $y=c$ for the definition. The first containment is a consequence of the $G$-equivariance, the lemma \ref{truco}, and the fact that $f(x)=f(c)$. Now, let us consider $(a,b)\in ker(f)$ such that $a\neq b$ and $(a,b)\notin \{(g\cdot x, h\cdot x)|\ h^{-1}g\in G_{y}\}$. Note that $h^{-1}g\in G_{c}$, for all $g,h\in G$, which implies that $x$ and $c$ cannot be in the same orbit (otherwise, $e\cdot a= g\cdot b$ implies $(a,b)\in \{(g\cdot x, h\cdot x)|\ h^{-1}g\in G_{c}\}$). We may suppose, without loss of generality, that $a\in Gx$ and $b\in Gc$, i.e., $a=g\cdot x$ and $b=h\cdot c$. As $g\cdot c = h\cdot c$, for all $g,h\in G$, it follows that $(a,b)=(g\cdot x,g\cdot c)\in \{(g\cdot x,g\cdot c),(g\cdot c, g\cdot x)|\ g\in G\}$.

We shall prove the second implication by its contrapositive. Let us suppose that $|X/G|\neq2$. Provided that $|X/G|=1$, then $f$ is a bijective function. Nevertheless, as stated in the definition of elementary collapsing, these functions are required not to be injective or surjective. So let us assume that $|X/G|>2$. Given any pair $x,y\in X$, there exists at least one element $z\in X$ such that $Gx\neq Gy \neq Gz$. As $f$ is a constant function, $c=f(x)=f(z)$, which implies that $(x,z)\in ker(f)$, which is a contradiction to the fourth point of the definition of elementary collapsing.
\end{proof}

A \emph{fixing elementary collapsing} is an elementary collapsing such that additionally satisfies the following property:
$$Fix(\tau)=X\setminus Gx. $$

Based on all the information above, given $x,y\in X$ such that $G_{x}\leq G_{y}$, we define the following $G$-equivariant functions:
$$  [x\mapsto y](z)= \left\{  \begin{array}{cc}
    g\cdot y &  z=g\cdot x, \\
    z & \mbox{otherwise.}
\end{array} \right. $$ 
We may verify that if $Gx \neq Gy$, then $[x\mapsto y]$ is neither injective nor surjective. We ought to prove that it is also an elementary collapsing of type $(G_{x},[G_{y}]_{N_{G_{x}}})$.

\begin{lema}
Given $x,y\in X$ such that $Gx\neq Gy$, then $[x\mapsto y]$ is a fixing elementary collapsing of type $(G_{x},[G_{y}]_{N_{G_{x}})}$.
\end{lema}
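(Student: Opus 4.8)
The plan is to verify that the explicitly defined function $[x\mapsto y]$ meets the definition of a fixing elementary collapsing directly, by checking each of the four defining conditions and the extra fixing property. First I would confirm that $[x\mapsto y]$ is genuinely $G$-equivariant: for $z = g\cdot x$ we have $[x\mapsto y](h\cdot z) = [x\mapsto y](hg\cdot x) = hg\cdot y = h\cdot(g\cdot y) = h\cdot [x\mapsto y](z)$, and for $z$ outside $Gx$ the function is the identity, so equivariance is immediate there since $h\cdot z$ also lies outside $Gx$ (as $Gx$ is $G$-invariant). One subtlety to address is that the definition of $[x\mapsto y]$ must be well-defined: if $g_1\cdot x = g_2\cdot x$ then $g_1^{-1}g_2 \in G_x \le G_y$, so $g_1\cdot y = g_2\cdot y$, which is exactly where the hypothesis $G_x \le G_y$ is used.

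Next I would check the four conditions of the definition of an elementary collapsing of type $(G_x, [G_y]_{N_{G_x}})$. Conditions i) and ii) are trivial: $Gx \neq Gy$ is given, and setting $H = G_x$ makes ii) hold by fiat. For iii), since $[x\mapsto y](x) = y$, we have $G_{\tau(x)} = G_y$, so $[G_y]_{N_{G_x}} = [G_{\tau(x)}]_{N_{G_x}}$ holds tautologically with $K = G_y$. The real content is condition iv), computing $\ker([x\mapsto y])$ precisely. The plan is to show the three listed pieces are exactly the kernel: the diagonal is always in the kernel; the pairs $(g\cdot x, g\cdot y)$ arise because $[x\mapsto y](g\cdot x) = g\cdot y = [x\mapsto y](g\cdot y)$ (the latter since $g\cdot y \notin Gx$); and the pairs $(g\cdot x, h\cdot x)$ with $h^{-1}g \in G_y$ collapse because $g\cdot y = h\cdot y$ exactly when $h^{-1}g \in G_y$. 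For the reverse inclusion I would argue that any pair $(a,b) \in \ker(\tau)$ with $a \neq b$ must have at least one coordinate in $Gx$ (since $\tau$ restricts to the identity off $Gx$, hence is injective there), and then case-analyze whether the other coordinate lies in $Gx$ or in $Gy$, recovering exactly the two nontrivial families.

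Finally, the fixing property $\Fix([x\mapsto y]) = X \setminus Gx$ follows directly from the definition: every $z \notin Gx$ satisfies $[x\mapsto y](z) = z$, and no $z = g\cdot x$ is fixed, since $[x\mapsto y](g\cdot x) = g\cdot y$ and $g\cdot y \neq g\cdot x$ because $Gx \neq Gy$. I expect the main obstacle to be the reverse inclusion in the kernel computation of condition iv): one must carefully rule out the possibility that a collapsed pair has a coordinate in $Gy$ paired with another coordinate in $Gy$, or that distinct orbits other than $Gx$ and $Gy$ interact — both of which are excluded by the fact that $\tau$ acts as the identity everywhere except on $Gx$, so the only images that can coincide are $y$-translates (coming from $Gx$) matching points already sitting in $Gy$. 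Keeping the bookkeeping of which coordinate belongs to which orbit straight is the delicate part; everything else is a routine verification against the definition.
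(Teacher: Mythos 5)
Your proposal is correct and takes essentially the same approach as the paper: a direct verification of the defining conditions, with the real work being the kernel computation by case analysis on which orbits the coordinates of a collapsed pair lie in, followed by the fixed-point check. Your organization of the reverse inclusion (noting that $\tau$ is the identity off $Gx$, so any nondiagonal collapsed pair has a coordinate in $Gx$) is in fact a cleaner packaging of the paper's five-case table, and your well-definedness check of $[x\mapsto y]$ via $G_x \leq G_y$ is a worthwhile detail the paper leaves implicit.
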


\begin{proof}
Most of the properties of an elementary collapsing are already satisfied by the demanded conditions. The two things left to verify are the kernel of $[x\mapsto y]$ and its set of fixed points. Let $(a,b)\in \ker([x\mapsto y])$ such that $a\neq b$, and $(a,b)\notin \{(g\cdot x, g\cdot y) \mid g\in G\}$. If we suppose that $a\neq g\cdot x$ or $b\neq h\cdot x$, for all $g,h\in G$, we ought to verify the following cases:
$$
\begin{array}{c|c|c|c|c}
a=g\cdot x & a=g\cdot y & a\neq g\cdot x \neq h\cdot y & a=g\cdot x & a=h\cdot y \\
b=h\cdot y & b=h\cdot y & b\neq g\cdot x \neq h\cdot y & b\neq h\cdot x \neq k\cdot y & b\neq g\cdot x \neq k\cdot y
\end{array}
$$
All the cases lead to a contradiction, implying that $a = g \cdot x$ and $b = h \cdot x$ for some $g, h \in G$. Consequently, this implies that $h^{-1} g \in G_{y}$. \\

Given $t \in X \setminus Gx$, by definition, it follows that $[x \mapsto y](t) = t$, so $X \setminus Gx \subseteq \ker([x \mapsto y])$. If $t \notin X \setminus Gx$, then $t = g \cdot x$ for some $g \in G$. In this case, we have $[x \mapsto y](g \cdot x) = g \cdot y$, thus $t = g \cdot x \notin \text{Fix}([x \mapsto y])$, which proves the second inclusion by its contrapositive. Therefore, we conclude that $\text{Fix}([x \mapsto y]) = X \setminus Gx$, and $[x \mapsto y]$ is a fixing elementary collapsing of type $(G_{x}, [G_{y}]_{N_{G_{x}}})$.
\end{proof}

We ought to notice that if $Gx=Gy$ and $G$ is a finite group, then these functions are bijective and shall be denoted as $(x\mapsto y)$. We also define the next $G$-equivariant and bijective functions:
$$  (x\leftrightarrow y)(z)= \left\{  \begin{array}{cc}
    g\cdot y &  z=g\cdot x, \\
    g\cdot x &  z=g\cdot y, \\
    z & \mbox{otherwise.}
\end{array} \right. $$ 
which only exists if and only if $G_{x}=G_{y}$.

\begin{lema}
Every fixing elementary collapsing is of the form $[x\mapsto y]$.
\end{lema}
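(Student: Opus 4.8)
The plan is to show that an arbitrary fixing elementary collapsing $\tau$ coincides with the map $[x\mapsto y]$ for a suitable choice of $x,y\in X$. First I would fix the element $x\in X$ coming from the definition of $\tau$ (the generator of the collapsed orbit $Gx$, with $G_x=H$), and set $y:=\tau(x)$. By hypothesis $\tau$ is fixing, so $\mathrm{Fix}(\tau)=X\setminus Gx$; this immediately settles the behaviour of both functions on $X\setminus Gx$, since for any $t\notin Gx$ we have $\tau(t)=t=[x\mapsto y](t)$ by definition of the fixing property and of $[x\mapsto y]$.

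The remaining task is to match the two functions on the orbit $Gx$ itself. Here I would use $G$-equivariance on both sides: any element of $Gx$ has the form $g\cdot x$, and equivariance gives $\tau(g\cdot x)=g\cdot\tau(x)=g\cdot y$, while by definition $[x\mapsto y](g\cdot x)=g\cdot y$ as well. Thus $\tau$ and $[x\mapsto y]$ agree on all of $X$, so they are equal. It then remains only to observe that this forces $Gx\neq Gy$: since $\tau$ is an elementary collapsing it is non-injective, and the condition $Gx\neq G\tau(x)=Gy$ is built into the definition (condition i), so the earlier lemma identifies $[x\mapsto y]$ as a fixing elementary collapsing of the matching type $(G_x,[G_y]_{N_{G_x}})$.

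The one subtle point, and the step I expect to need the most care, is well-definedness of the assignment $g\cdot x\mapsto g\cdot y$ under the equivariance argument: if $g\cdot x=g'\cdot x$, i.e. $g^{-1}g'\in G_x=H$, I must check that $g\cdot y=g'\cdot y$, which requires $H\le G_y$. This is exactly where Lemma \ref{lema1}(i) enters, because the existence of $\tau\in\EndG$ with $\tau(x)=y$ already guarantees $G_x\le G_y$; equivalently one reads it off directly from $\tau(g\cdot x)=\tau(g'\cdot x)$ via equivariance. Consequently the formula defining $[x\mapsto y]$ is consistent, and since $G_x\le G_y$ the map $[x\mapsto y]$ is genuinely defined in the sense used earlier in the paper. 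Having verified agreement on $Gx$ and on $X\setminus Gx$, I would conclude that every fixing elementary collapsing equals some $[x\mapsto y]$, completing the proof.
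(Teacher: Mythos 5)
Your proof is correct and takes essentially the same approach as the paper: both arguments split $X$ into $X\setminus Gx$ (settled at once by the fixing property) and the orbit $Gx$, where one checks $\tau(g\cdot x)=g\cdot y=[x\mapsto y](g\cdot x)$. The only cosmetic difference is that you set $y:=\tau(x)$ and use equivariance directly, while the paper takes the witness $y$ from the definition and evaluates via the kernel condition plus the fixing property; these coincide, since for a fixing collapsing $\tau(x)=\tau(y)=y$, and your added well-definedness check $G_{x}\leq G_{y}$ is a point the paper leaves implicit.
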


\begin{proof}
Let be $\tau \in\EndG$ a fixing elementary collapsing of type $(H,[K]_{N_{H}})$, there already exist elements $x,y\in X$ that satisfy the definition of an elementary collapsing. It is sufficient to prove that $\tau=[x\mapsto y]$.\\
As $Fix(\tau)=X\setminus Gx$, for all $t\notin Gx$, it happens that
$$\tau(t)=t=[x\mapsto y](t).$$
For $t\in Gx$, there exists $g\in G$ such that $t=g\cdot x$, but as $(g\cdot x, g\cdot y)\in ker(\tau)$, it holds that 
$$\tau(g\cdot x)=\tau(g\cdot y)=g\cdot y= g\cdot [x\mapsto y](x)=[x\mapsto y](g\cdot x).$$
\end{proof}

We ought to manage some properties of the elementary collapsings in order to accomplish the goals of this paper. The following are intrinsic properties of the fixing elementary collapsings.

\begin{proposicion}\label{equivalence}
It holds that $[x\mapsto y]=[g\cdot x\mapsto g\cdot y]$, for all $g\in G$.
\end{proposicion}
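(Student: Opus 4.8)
The plan is to prove the equality of the two functions $[x\mapsto y]$ and $[g\cdot x\mapsto g\cdot y]$ by showing they agree pointwise on all of $X$. Both are defined by cases according to the definition introduced just before Proposition~\ref{equivalence}, so the natural strategy is to check that the two functions partition $X$ in exactly the same way and produce the same output on each piece. The key observation is that the ``active'' orbit of $[x\mapsto y]$ is $Gx$, and since $G(g\cdot x)=Gx$ (orbits are invariant under the group action), the ``active'' orbit of $[g\cdot x\mapsto g\cdot y]$ is also $Gx$. Hence both functions fix precisely the same set $X\setminus Gx$, and it only remains to verify they agree on $Gx$.

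First I would handle the trivial case: for any $t\notin Gx$, the definition gives $[x\mapsto y](t)=t$, and since $t\notin G(g\cdot x)=Gx$ as well, we also have $[g\cdot x\mapsto g\cdot y](t)=t$. So the two functions agree off the orbit $Gx$. Next I would take an arbitrary $t\in Gx$ and write $t=h\cdot x$ for some $h\in G$; applying the first function directly yields $[x\mapsto y](h\cdot x)=h\cdot y$. For the second function, the subtlety is that I must re-express the same element $t$ in the form $k\cdot(g\cdot x)$ so that the defining rule of $[g\cdot x\mapsto g\cdot y]$ can be applied. Since $t=h\cdot x=(hg^{-1})\cdot(g\cdot x)$, I can take $k=hg^{-1}$, and the definition gives $[g\cdot x\mapsto g\cdot y](t)=(hg^{-1})\cdot(g\cdot y)=h\cdot y$, using associativity of the action and the group identity $(hg^{-1})g=h$. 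This matches the output of the first function, completing the pointwise comparison.

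The one point that requires genuine care — and which I expect to be the main obstacle — is that the case definition of $[x\mapsto y]$ relies on writing an element of the orbit in the form $g\cdot x$, and this representation is \emph{not} unique when the stabilizer $G_x$ is nontrivial. To be rigorous I would note that $[x\mapsto y]$ is well-defined precisely because, as established in the surrounding discussion, whenever $g_1\cdot x=g_2\cdot x$ one has $g_1^{-1}g_2\in G_x\leq G_y$, forcing $g_1\cdot y=g_2\cdot y$; the same well-definedness argument applies verbatim to $[g\cdot x\mapsto g\cdot y]$ since $G_{g\cdot x}=gG_xg^{-1}$ and the hypothesis $G_x\leq G_y$ transports correctly under conjugation. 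Once well-definedness is granted, the pointwise computation above is unambiguous and the two functions coincide on every point of $X$, yielding $[x\mapsto y]=[g\cdot x\mapsto g\cdot y]$.
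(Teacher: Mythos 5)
Your proof is correct and follows essentially the same route as the paper's: a pointwise comparison splitting into $t\notin Gx$ (where both functions act as the identity, since $G(g\cdot x)=Gx$) and $t=h\cdot x\in Gx$ (where rewriting $t=(hg^{-1})\cdot(g\cdot x)$ shows both output $h\cdot y$). Your added remark on well-definedness of the case description when the stabilizer is nontrivial, using $G_x\leq G_y$ and $G_{g\cdot x}=gG_xg^{-1}$, is a careful touch the paper leaves implicit, but it does not change the substance of the argument.
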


\begin{proof}
We must evaluate theses functions. Given $t\notin Gx$, it happens that 
$$[x\mapsto y](t)=t=[g\cdot x \mapsto g\cdot y](t).$$
If $t\in Gx$, there exists $h\in G$ such that $t=h\cdot x$, then 
$$ [x\mapsto y](h\cdot x)=h\cdot[x\mapsto y](x) =h\cdot y.$$
On the other hand, 
$$ [g\cdot x\mapsto g\cdot y](h\cdot x)=[g\cdot x\mapsto g\cdot y](h\cdot g^{-1} \cdot g\cdot x) =hg^{-1}[g\cdot x \mapsto \cdot g\cdot y](g\cdot x)$$
$$=hg^{-1}(g\cdot y)=h\cdot y.$$

\end{proof}

\begin{proposicion}
Let be $x,y,y'\in X$ such that $[x\mapsto y]$ and $[x\mapsto y']$ exist, the it holds that:
$$[x\mapsto y'][x\mapsto y]=[x\mapsto y].$$

\end{proposicion}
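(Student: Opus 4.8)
The plan is to prove the identity by evaluating the composite $[x\mapsto y']\,[x\mapsto y]$ on an arbitrary point $z\in X$, keeping in mind that in the paper's convention this product means ``apply $[x\mapsto y]$ first, then $[x\mapsto y']$.'' The cleanest route I would take is conceptual rather than case-by-case: observe that the only points actually moved by $[x\mapsto y']$ are those of the orbit $Gx$, so it suffices to check that $[x\mapsto y]$ never lands inside $Gx$. Once that is established, $[x\mapsto y']$ restricts to the identity on the whole image of $[x\mapsto y]$, and the desired equality of functions follows at once.

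Concretely, I would first record the two ingredients I need. First, by its defining formula $[x\mapsto y']$ fixes every point of $X\setminus Gx$ pointwise. Second, since $[x\mapsto y]$ is a genuine (non-bijective) fixing elementary collapsing, we have $Gx\neq Gy$, and distinct orbits are disjoint. From this it follows that $\mathrm{Im}([x\mapsto y])=(X\setminus Gx)\cup Gy$ is disjoint from $Gx$: a point $z=g\cdot x\in Gx$ is sent to $g\cdot y\in Gy$, whereas any $z\notin Gx$ is fixed, so in either case $[x\mapsto y](z)\notin Gx$. Combining this with the fact that $[x\mapsto y']$ is the identity off $Gx$, I get $[x\mapsto y']\bigl([x\mapsto y](z)\bigr)=[x\mapsto y](z)$ for every $z\in X$, which is exactly the claimed identity.

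The nearest thing to an obstacle is justifying that $Gx$ is disjoint from the image of $[x\mapsto y]$, i.e.\ that nothing is mapped into $Gx$; this rests entirely on $Gx\neq Gy$, which is guaranteed by the square-bracket notation denoting an honest fixing elementary collapsing (were $Gx=Gy$ the statement would genuinely fail, so I would flag that hypothesis explicitly). If a fully mechanical check is preferred over the image argument, I would instead split into the cases $z\in Gx$ and $z\notin Gx$ and evaluate directly from the defining formula: for $z=g\cdot x$ one gets $[x\mapsto y](z)=g\cdot y\in Gy$, which lies outside $Gx$ and is therefore fixed by $[x\mapsto y']$, while for $z\notin Gx$ both functions leave $z$ untouched. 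Both cases collapse to $[x\mapsto y](z)$, yielding the same conclusion.
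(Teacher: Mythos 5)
Your proof is correct and is essentially the paper's own argument: the paper performs exactly the two-case check ($t\in Gx$ versus $t\notin Gx$) that you offer as your ``mechanical'' variant, and your image-based packaging rests on the same two facts, namely that $[x\mapsto y']$ fixes $X\setminus Gx$ pointwise and that $[x\mapsto y]$ never maps into $Gx$ because $Gx\neq Gy$. A minor point in your favor: you make explicit the hypothesis $Gx\neq Gy$, which the paper's step $[x\mapsto y'](g\cdot y)=g\cdot y$ uses silently.
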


\begin{proof}
We ought to consider two cases, let $t\notin Gx$, note that $[x\mapsto y](t)=t,$ then 
$$t=[x\mapsto y](t)=[x\mapsto y'][x\mapsto y](t).$$
If $t\in Gx$, there exists $g\in G$ such that $t=G\cdot x$, and it happens that
$$[x\mapsto y'][x\mapsto y](g\cdot x)=  [x\mapsto y'](g\cdot y)=(g\cdot y)$$
and in the other hand
$$[x\mapsto y] (g\cdot x)=(g\cdot y)$$
which proves the equality.\end{proof}

\begin{proposicion}
Let be $x,y,\in X$ such that $[x\mapsto y]$ and $[x\mapsto y']$ exist, the it holds that:
$$[x\mapsto y][y\mapsto x]=[x\mapsto y].$$

\end{proposicion}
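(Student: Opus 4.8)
The plan is to establish the identity by a direct, pointwise evaluation of both sides on an arbitrary element $t\in X$, once I have pinned down what the hypothesis imposes on $x$ and $y$. Reading the equation as stated, the left factor of the composite uses $[y\mapsto x]$, so the standing assumption is that both $[x\mapsto y]$ and $[y\mapsto x]$ exist as fixing elementary collapsings. By the definition of these functions, the existence of $[x\mapsto y]$ forces $G_{x}\leq G_{y}$ and the existence of $[y\mapsto x]$ forces $G_{y}\leq G_{x}$; hence $G_{x}=G_{y}$. Moreover, being genuine collapsings (rather than the bijective functions $(x\mapsto y)$), they satisfy $Gx\neq Gy$, and since distinct orbits of a group action are disjoint this yields the crucial fact $Gx\cap Gy=\emptyset$.

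First I would fix the composition convention, namely that $[x\mapsto y][y\mapsto x]$ means apply $[y\mapsto x]$ first and then $[x\mapsto y]$. I would then partition $X$ into the three $G$-invariant pieces $Gx$, $Gy$, and $X\setminus(Gx\cup Gy)$ and evaluate the left-hand side on each. For $t\in Gx$, write $t=g\cdot x$; since $Gx\cap Gy=\emptyset$ we have $t\notin Gy$, so $[y\mapsto x]$ fixes $t$ and then
$$[x\mapsto y][y\mapsto x](g\cdot x)=[x\mapsto y](g\cdot x)=g\cdot y=[x\mapsto y](t).$$
For $t\notin Gx\cup Gy$ both factors fix $t$, so the composite returns $t$, which also equals $[x\mapsto y](t)$ because $t\notin Gx$.

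The decisive case is $t\in Gy$. Writing $t=g\cdot y$, the right factor genuinely moves $t$, sending $g\cdot y\mapsto g\cdot x\in Gx$, and the left factor then sends it back:
$$[x\mapsto y][y\mapsto x](g\cdot y)=[x\mapsto y](g\cdot x)=g\cdot y=t.$$
On the other hand $[x\mapsto y]$ fixes $t$ directly, since $t\in Gy$ and $Gy\cap Gx=\emptyset$ give $t\notin Gx$, so $[x\mapsto y](t)=t$ as well. This is where I expect the only real subtlety to lie: one must see that the move into $Gx$ performed by $[y\mapsto x]$ is exactly undone by $[x\mapsto y]$, so that the composite agrees with $[x\mapsto y]$ (which leaves $Gy$ pointwise fixed) instead of producing a new function. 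Throughout, the unambiguity of the expressions $t=g\cdot x$ and $t=g\cdot y$ up to the stabilizer, guaranteed by $G_{x}=G_{y}$ together with Proposition \ref{equivalence}, is what keeps each evaluation well defined and $G$-equivariant. Having matched both sides on all three pieces of the partition, the equality $[x\mapsto y][y\mapsto x]=[x\mapsto y]$ follows.
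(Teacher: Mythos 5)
Your proof is correct and takes essentially the same approach as the paper's: a pointwise evaluation of both sides split into the three cases $t\in Gx$, $t\in Gy$, and $t\notin Gx\cup Gy$, with the middle case resolved by noting that $[y\mapsto x]$ sends $g\cdot y$ to $g\cdot x$ and $[x\mapsto y]$ sends it back to $g\cdot y$. Your preliminary observations---that the hypothesis should assert the existence of $[y\mapsto x]$ (the statement as printed has a typo involving $y'$), and that $Gx\cap Gy=\emptyset$ is what makes the case analysis well defined---are correct and make explicit what the paper leaves implicit.
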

\begin{proof}
We ought to consider now three cases, let $t\in Gx$,  there exists $g\in G$ such that $t=G\cdot x$, then 
$$[x\mapsto y][y\mapsto x](g\cdot x)=[x\mapsto y](g\cdot x)=g\cdot y=[x\mapsto y](g\cdot x)$$
If $t\in Gy$, there exists $g\in G$ such that $t=G\cdot y$, and it happens that
$$[x\mapsto y][y\mapsto x](g\cdot y)=  [x\mapsto y](g\cdot x)=(g\cdot y)$$
and in the other hand
$$[x\mapsto y] (g\cdot y)=(g\cdot y).$$
If $t\notin Gx \cup Gy$, then 
$$[x\mapsto y][y\mapsto x](t)=[x\mapsto y](t).$$

\end{proof}

\begin{proposicion}
If $\tau$ is a fixing elementary collapsing, let say $[x\mapsto y]$, and $\eta$ an elementary collapsing of type $(G_{x},[G_{y}]_{N_{G_{x}}})$, then $\eta= \eta \tau$. 
\end{proposicion}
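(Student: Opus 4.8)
The plan is to verify the identity $\eta=\eta\tau$ directly on each point of $X$, exploiting the very simple shape of the fixing elementary collapsing $\tau=[x\mapsto y]$, which moves only the orbit $Gx$ and fixes everything else. Everything will reduce to a single equality about $\eta$.

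First I would split $X$ into $Gx$ and its complement. For $t\notin Gx$ the definition of $[x\mapsto y]$ gives $\tau(t)=t$, so $\eta\tau(t)=\eta(t)$ with nothing to prove; this case uses no property of $\eta$ whatsoever. For $t\in Gx$, write $t=g\cdot x$ for some $g\in G$; then $\tau(g\cdot x)=g\cdot y$, and $G$-equivariance of $\eta$ yields $\eta\tau(g\cdot x)=\eta(g\cdot y)=g\cdot\eta(y)$, whereas $\eta(g\cdot x)=g\cdot\eta(x)$. Hence the whole proposition collapses to the single equality $\eta(x)=\eta(y)$.

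To secure $\eta(x)=\eta(y)$ I would unwind the hypothesis that $\eta$ is an elementary collapsing of type $(G_{x},[G_{y}]_{N_{G_{x}}})$ witnessed by the pair $x,y$. Condition (iv) of the definition of elementary collapsing then forces $\{(g\cdot x,g\cdot y)\mid g\in G\}\subseteq\ker(\eta)$; specializing $g=e$ gives $(x,y)\in\ker(\eta)$, that is, $\eta(x)=\eta(y)$. Feeding this back into the computation for $t\in Gx$ makes $\eta\tau(t)=\eta(t)$ there as well, which finishes the verification on all of $X$.

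I expect the only genuine content to be the equality $\eta(x)=\eta(y)$; the two case computations are mechanical once $G$-equivariance is invoked. The delicate point worth stating explicitly is that this equality depends on $\eta$ actually identifying the same pair $x,y$ that defines $\tau$ (equivalently, on $(x,y)\in\ker(\eta)$): the $N_{G_{x}}$-conjugacy datum in the type determines only the stabilizer class of the target orbit, so I would make sure the statement is read as ``$\eta$ is witnessed by the same $x,y$'', since a collapsing that merged $Gx$ onto a different orbit of the same stabilizer type would not satisfy $\eta\tau=\eta$.
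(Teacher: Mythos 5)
Your proof is correct and takes essentially the same route as the paper: split on $t\notin Gx$ versus $t=g\cdot x$, invoke $G$-equivariance, and reduce the whole identity to the single equality $\eta(x)=\eta(y)$, which the paper likewise pulls out of the definition of elementary collapsing. Your closing caveat is also well taken and worth keeping: the paper's own proof silently assumes that $\eta$'s witnessing pair is the same $(x,y)$ that defines $\tau$ (its step ``$\eta(x)=\eta(y)$'' has no other justification), and without that reading the statement genuinely fails --- e.g.\ for $G=\mathbb{Z}_{2}$ acting on two free orbits $Gx$, $Gx''$ and a fixed point $y$, the maps $\tau=[x\mapsto y]$ and $\eta=[x''\mapsto y]$ have the same type, yet $\eta\tau(x)=y\neq x=\eta(x)$.
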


\begin{proof}
If $t\neq g\cdot x$, for all $g\in G$, then $\tau(t)=t$, then $\eta \tau(t)= \eta (t)$. On the other hand, if $t=g\cdot x$, for some $g\in G$, $\tau(t)=g\cdot y=g\cdot \tau(y)$. As $\eta$ is an elementary collapsings, then $\eta(x)=\eta (y)=\eta\tau(y)=\eta \tau(x)$, it follows that $\eta(t)=\eta \tau(t)$. 
\end{proof}

Now that we understand the definition of an elementary collapsing, our next goal is to describe part of the Green's relations of our monoid in terms of these highly relevant functions.

\section{Green's relations over elementary collapsings}
 These elements have proven to be of such importance that we dedicate this section of the document to describing the Green relations of the monoid based on elementary collapsings. In this section, we prove the characteristics that an element must possess in the monoid in order to be related to an elementary collapsing.

\begin{proposicion}
Let be $x,y\in X$ such that $[x\mapsto y]$ and $[y\mapsto x]$ exist, then $[x\mapsto y]\sim_{\mathcal{L}}[y\mapsto x]$.
\end{proposicion}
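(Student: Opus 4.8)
The plan is to reduce everything to a kernel comparison via Theorem \ref{L-ker}, after first pinning down what the hypothesis actually forces on $G_x$ and $G_y$. The first step is to observe that the mere existence of $[x\mapsto y]$ requires $G_x\leq G_y$: the assignment $g\cdot x\mapsto g\cdot y$ is well defined only if $g\cdot x=g'\cdot x$ forces $g\cdot y=g'\cdot y$, i.e. only if $G_x\subseteq G_y$. Symmetrically, the existence of $[y\mapsto x]$ requires $G_y\subseteq G_x$. Hence the standing hypothesis that \emph{both} functions exist silently forces $G_x=G_y$, which I would use throughout the rest of the argument.

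Granting $G_x=G_y$, I would invoke Theorem \ref{L-ker}, which states that $f\sim_{\mathcal{L}}g$ iff $\ker(f)=\ker(g)$, and simply compute the two kernels. Writing $\Delta=\{(a,a)\mid a\in X\}$ for the diagonal, there are two cases. If $Gx=Gy$ then both maps are bijective (they permute the single orbit and fix the rest, hence lie in $\AutG\subseteq S$), so both kernels equal $\Delta$ and the conclusion is immediate. If $Gx\neq Gy$, each map is a fixing elementary collapsing, so its kernel is given by the three-block description in the definition; here the key point is that the third block (the pairs $(g\cdot x,h\cdot x)$ with $h^{-1}g\in G_y$) collapses into $\Delta$, because $h^{-1}g\in G_y=G_x$ already forces $g\cdot x=h\cdot x$. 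What survives in both cases is exactly $\ker([x\mapsto y])=\Delta\cup\{(g\cdot x,g\cdot y),(g\cdot y,g\cdot x)\mid g\in G\}$, and the identical set for $[y\mapsto x]$; since the two kernels coincide, Theorem \ref{L-ker} delivers $[x\mapsto y]\sim_{\mathcal{L}}[y\mapsto x]$.

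As a more constructive alternative that makes $S[x\mapsto y]=S[y\mapsto x]$ transparent, I would use the swap $\sigma=(x\leftrightarrow y)$, which exists precisely because $G_x=G_y$ and lies in $\AutG\subseteq S$. I would verify the factorization $[x\mapsto y]=\sigma\,[y\mapsto x]$ by evaluating on the orbit of $x$, the orbit of $y$, and the remaining points; the only nonroutine region is $Gy$, where $[y\mapsto x]$ sends $g\cdot y\mapsto g\cdot x$ and $\sigma$ then returns $g\cdot x\mapsto g\cdot y$, agreeing with the identity action of $[x\mapsto y]$ there. Since $\sigma$ is an involution, the symmetric identity $[y\mapsto x]=\sigma\,[x\mapsto y]$ also holds, yielding both inclusions and hence equality of the principal left ideals. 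The main obstacle is not the algebra but the bookkeeping at the outset: recognizing that "both functions exist" forces $G_x=G_y$, and then cleanly separating the degenerate case $Gx=Gy$ (where the maps are units) from the genuine collapsing case $Gx\neq Gy$. Once that reduction is in place, both the kernel computation and the explicit factorization through $(x\leftrightarrow y)$ are short and mechanical.
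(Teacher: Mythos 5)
Your proof is correct, but it takes a genuinely different route from the paper's. The paper disposes of this proposition in two lines by quoting the absorption identity $[x\mapsto y][y\mapsto x]=[x\mapsto y]$ proved in the proposition immediately preceding it: that identity exhibits $[x\mapsto y]\in S[y\mapsto x]$, hence $S[x\mapsto y]\subseteq S[y\mapsto x]$, and the symmetric identity $[y\mapsto x][x\mapsto y]=[y\mapsto x]$ gives the reverse containment. You instead (i) make explicit the hidden consequence of the hypothesis, namely that joint existence of the two maps forces $G_x=G_y$ (a point the paper never states, although its own preceding proof also implicitly assumes the orbits $Gx$ and $Gy$ are distinct), (ii) compute both kernels and invoke the kernel criterion of Theorem~\ref{L-ker} --- your observation that the third block of the collapsing kernel degenerates into the diagonal when $G_y=G_x$ is exactly right, after which the two kernels visibly coincide --- and (iii) offer the factorization $[x\mapsto y]=(x\leftrightarrow y)\,[y\mapsto x]$. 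Each approach buys something: the paper's is shorter because it reuses prior work; your kernel argument is self-contained and identifies the actual $\mathcal{L}$-class invariant; and your unit factorization proves something slightly stronger than $\mathcal{L}$-equivalence, namely that the two collapsings differ by left multiplication by an element of the group of units $\AutG$, which also explains structurally why their kernels agree. One small caveat: the swap-based verification, which uses that $[x\mapsto y]$ acts as the identity on $Gy$, only makes sense in the genuine collapsing case $Gx\neq Gy$ (when $Gx=Gy$ the map $[x\mapsto y]$ permutes that orbit rather than fixing it, and the two defining clauses of $(x\leftrightarrow y)$ overlap); this costs you nothing, since your kernel argument already covers that degenerate case, where both maps are units and the conclusion is immediate.
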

\begin{proof}
As $[x\mapsto y]=[x\mapsto y][y\mapsto x]$ it follows that $[x\mapsto y]\in S[y\mapsto x]$, and consequently $S[x\mapsto y]\subseteq S[y\mapsto x]$. An analogous argument proves the equality. 
\end{proof}

\begin{lema}
If there exists the elementary collapsings $[x\mapsto y]$ and $[y\mapsto x]$ then, they are not $\mathcal{R}$-related. 
\end{lema}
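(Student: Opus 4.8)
The plan is to show the two collapsings cannot be $\mathcal{R}$-related by exhibiting that they have different images, and then to invoke the earlier lemma stating that $\mathcal{R}$-related functions share a common image; its contrapositive gives that functions with distinct images are not $\mathcal{R}$-related. First I would record that, since $[x\mapsto y]$ and $[y\mapsto x]$ are both elementary collapsings, condition i) of the definition of an elementary collapsing forces $Gx \neq Gy$; in particular the orbits $Gx$ and $Gy$ are distinct, hence disjoint.

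Next I would compute the two images directly from the definition of the functions. The function $[x\mapsto y]$ fixes every point outside $Gx$ and sends each $g\cdot x$ to $g\cdot y$, so it maps the orbit $Gx$ onto $Gy$ and leaves $X\setminus Gx$ pointwise fixed. Since $Gy \subseteq X\setminus Gx$ (the orbits are disjoint), this yields $Im([x\mapsto y]) = X\setminus Gx$. By the symmetric computation, $Im([y\mapsto x]) = X\setminus Gy$.

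Finally, I would argue that these images differ: because $Gx \neq Gy$ and complementation is injective on subsets of $X$, we get $X\setminus Gx \neq X\setminus Gy$. Concretely, $Gx$ is disjoint from $X\setminus Gx$, whereas $Gx \subseteq X\setminus Gy$, so the two complements cannot coincide. Applying the lemma ``$f\sim_{\mathcal{R}}g \Rightarrow Im(f)=Im(g)$'' in contrapositive form, we conclude that $[x\mapsto y]$ and $[y\mapsto x]$ are not $\mathcal{R}$-related.

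I do not anticipate a genuine obstacle here: the argument is essentially a one-line image computation feeding into an already-proved lemma. The only point requiring a little care is making explicit why distinct orbits produce distinct complements, and this is immediate from the disjointness of orbits and need not be belabored. It is worth noting, for contrast with the preceding proposition, that the collapsings \emph{are} $\mathcal{L}$-related, so this lemma pinpoints the asymmetry between the left and right structures on this pair.
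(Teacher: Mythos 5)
Your proof is correct and takes essentially the same route as the paper: both arguments rest on an image obstruction, since the paper's proof by contradiction amounts to noting that $y$ lies in $Im([x\mapsto y])$ but not in $Im([y\mapsto x])$, which is impossible under a factorization $[x\mapsto y]=[y\mapsto x]m$. The only cosmetic difference is that you compute both images in full ($X\setminus Gx$ versus $X\setminus Gy$) and then invoke the earlier lemma that $\mathcal{R}$-related functions have equal images, whereas the paper inlines the same observation using the single witness $y$.
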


\begin{proof}
Let suppose that $[x\mapsto y]$ and $[y\mapsto x]$ exist, and that they are $\mathcal{R}$-related. Then there exists $m\in \EndG$ such that $[x\mapsto y]=[y\mapsto x]m$. It is easy to check that $y\notin Im([y\mapsto x])$, so $[y\mapsto x](t)\neq y$, for all $t\in X$, in particular $[y\mapsto x](m(x))\neq y$. On the other side $[x\mapsto y](x)=y$, which is a contradiction, hence $[x\mapsto y]$ is not $\mathcal{R}$-related to $[y\mapsto x]$.
\end{proof}

\begin{proposicion}
Let be $x,y,\in X$ such that $[x\mapsto y]$ and $[x\mapsto y']$ exist, then $[x\mapsto y]\sim_\mathcal{R}[x\mapsto y']$.
\end{proposicion}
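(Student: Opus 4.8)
The plan is to verify the defining condition of the $\mathcal{R}$-relation directly, namely that $[x\mapsto y]\,S=[x\mapsto y']\,S$, by exhibiting each of the two functions inside the principal right ideal generated by the other. This mirrors the structure of the preceding $\mathcal{L}$-proposition, where the two required one-sided factorizations were simply read off from a composition identity already established for fixing elementary collapsings.

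First I would recall the composition identity proved earlier, that whenever $[x\mapsto y]$ and $[x\mapsto y']$ both exist one has $[x\mapsto y']\,[x\mapsto y]=[x\mapsto y]$. Reading this as $[x\mapsto y]=[x\mapsto y']\,m$ with $m=[x\mapsto y]\in S$ shows $[x\mapsto y]\in[x\mapsto y']\,S$, whence $[x\mapsto y]\,S=([x\mapsto y']\,m)\,S\subseteq[x\mapsto y']\,S$.

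Next I would apply the very same identity with the roles of $y$ and $y'$ interchanged; the hypotheses are symmetric in $y$ and $y'$, so this is legitimate. This yields $[x\mapsto y]\,[x\mapsto y']=[x\mapsto y']$, hence $[x\mapsto y']\in[x\mapsto y]\,S$ and the reverse inclusion $[x\mapsto y']\,S\subseteq[x\mapsto y]\,S$. Combining the two inclusions gives $[x\mapsto y]\,S=[x\mapsto y']\,S$, which is precisely $[x\mapsto y]\sim_{\mathcal{R}}[x\mapsto y']$.

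I do not anticipate a genuine obstacle: the substantive content is carried entirely by the composition identity quoted above, and the argument reduces to a two-line bookkeeping of inclusions. The one point requiring care is the side and the composition order — one must ensure the factor $m$ is multiplied on the right, so that the factorizations land in the principal right ideals rather than the left ones, matching the convention $aS=\{as:s\in S\}$. As a consistency check against the earlier lemma that $\mathcal{R}$-related functions share an image, I note that both $[x\mapsto y]$ and $[x\mapsto y']$ have image $X\setminus Gx$ when $Gx\neq Gy$ and $Gx\neq Gy'$, which is compatible with the conclusion.
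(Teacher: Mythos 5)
Your proof is correct and follows essentially the same route as the paper: both rely on the previously established identity $[x\mapsto y'][x\mapsto y]=[x\mapsto y]$ to obtain $[x\mapsto y]S\subseteq[x\mapsto y']S$, and then invoke the symmetry in $y$ and $y'$ (the paper's ``analogous argument'') for the reverse inclusion. Your explicit attention to the side of multiplication and the image-based consistency check are fine additions but do not change the argument.
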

\begin{proof}
As $[x\mapsto y]=[x\mapsto y'][x\mapsto y]$ it follows that $[x\mapsto y]\in [x\mapsto y']S$, and consequently $[x\mapsto y]S\subseteq [x\mapsto y']S$. Once again, an analogous argument proves the equality. 
\end{proof}

\begin{proposicion}\label{R-rel}
Given to fixing elementary collapsing of the form $[x\mapsto y]$ and $[x'\mapsto y']$ then they are related with the relation $\mathcal{R}$ if and only if there exists $g\in G$ such that $x'=g\cdot x$.
\end{proposicion}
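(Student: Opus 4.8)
The plan is to prove the biconditional by characterizing the $\mathcal{R}$-relation for these fixing elementary collapsings in terms of their images. The key structural fact to exploit is that a fixing elementary collapsing $[x\mapsto y]$ fixes everything outside $Gx$, so its image is exactly $X\setminus Gx$ together with the orbit $Gy$ — more precisely, $\mathrm{Im}([x\mapsto y]) = X\setminus (Gx\setminus Gy)$, and in particular the orbit $Gx$ is absent from the image (when $Gx\neq Gy$). Thus the image of $[x\mapsto y]$ ``remembers'' precisely which orbit $Gx$ has been collapsed away. I would first record this observation, since the earlier lemma already tells us that $\mathcal{R}$-related functions must share the same image.

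\textbf{Forward direction.} Suppose $[x\mapsto y]\sim_{\mathcal{R}}[x'\mapsto y']$. By the lemma stating that $\mathcal{R}$-related functions have equal image, I would deduce $\mathrm{Im}([x\mapsto y])=\mathrm{Im}([x'\mapsto y'])$. Using the image description above, the unique orbit missing from $\mathrm{Im}([x\mapsto y])$ is $Gx$ and the one missing from $\mathrm{Im}([x'\mapsto y'])$ is $Gx'$; equal images force $Gx=Gx'$, which is exactly the assertion that there exists $g\in G$ with $x'=g\cdot x$. (A small care-point: I must confirm the collapsed orbit is genuinely absent from the image, i.e. that no point of $Gx$ is the image of some point outside $Gx$; this follows because $[x\mapsto y]$ is injective off $Gx$ and sends $Gx$ into $Gy\neq Gx$.)

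\textbf{Reverse direction.} Suppose $x'=g\cdot x$ for some $g\in G$. By Proposition \ref{equivalence} we have $[x'\mapsto y']=[g\cdot x\mapsto y']$, and since $x'=g\cdot x$ forces $G_{x'}=G_{x}$, the collapsings $[x\mapsto y']$ (up to this conjugation) and $[x\mapsto y]$ share the same source orbit. I would then exhibit explicit elements $m_1,m_2\in\EndG$ realizing $[x\mapsto y]=[x'\mapsto y']m_1$ and $[x'\mapsto y']=[x\mapsto y]m_2$, using the bijective-support machinery: a bijection moving $Gy$ to $Gy'$ composed appropriately lets one collapsing be obtained from the other. The cleanest route is to first reduce via Proposition \ref{equivalence} to the case $x'=x$ (absorbing the $g$), and then invoke the already-proven proposition that $[x\mapsto y]\sim_{\mathcal{R}}[x\mapsto y']$ whenever both exist. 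Together these give $[x\mapsto y]\sim_{\mathcal{R}}[x\mapsto y']=[x'\mapsto y']$.

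\textbf{Main obstacle.} I expect the genuine work to lie in the forward direction's care-point — rigorously arguing that $Gx$ is the unique orbit omitted from the image and that equal images pin down the collapsed orbit — rather than in constructing the witnessing maps for the reverse direction, which the preceding propositions largely hand to us. A secondary subtlety is ensuring that when I reduce $x'=g\cdot x$ to the case $x'=x$, the target $y'$ transforms compatibly so that the relevant collapsings still exist (i.e. that $G_{x}\le G_{g^{-1}\cdot y'}$); this should follow from $G$-equivariance of the stabilizer conditions, but it warrants an explicit check to keep the argument airtight.
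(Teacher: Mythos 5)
Your proposal is correct, and its two halves relate to the paper's proof differently. The reverse direction is essentially the paper's own argument: reduce via Proposition \ref{equivalence} to $[x'\mapsto y']=[x\mapsto g^{-1}\cdot y']$ and then use the already-established fact that $[x\mapsto y]\sim_{\mathcal{R}}[x\mapsto y']$ whenever both exist; the existence check you flag does go through, since $G_{x'}\leq G_{y'}$ and $x'=g\cdot x$ give $G_{x}\leq g^{-1}G_{y'}g=G_{g^{-1}\cdot y'}$. The forward direction, however, takes a genuinely different route. The paper argues by contradiction: assuming $x'\neq g\cdot x$ for all $g$, it evaluates the witnessing equation $[x\mapsto y]=[x'\mapsto y']m_{1}$ at $x'$ to get $x'=[x'\mapsto y'](m_{1}(x'))$, and then rules out both cases $m_{1}(x')\in Gx'$ and $m_{1}(x')\notin Gx'$, each yielding a contradiction. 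You instead combine the earlier lemma that $\mathcal{R}$-related elements of $\EndG$ have equal images with the observation that $\mathrm{Im}([x\mapsto y])=X\setminus Gx$: points off $Gx$ are fixed, while $Gx$ is sent into the disjoint orbit $Gy$, so no point of $Gx$ is ever hit (your ``care-point'' is exactly this and it holds). Equal images then force $Gx=Gx'$ immediately. Your version is shorter and more structural --- it exhibits the missing orbit as the invariant that the $\mathcal{R}$-class of a fixing collapsing remembers, which foreshadows the paper's Lemma \ref{el chido} and its corollary --- whereas the paper's case analysis is self-contained, relying only on the defining formula of $[x\mapsto y]$ rather than on the image lemma.
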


\begin{proof}
As they are related, there exist $m_{1},m_{2}\in \EndG$ such that:
$$[x\mapsto y]=[x'\mapsto y']m_{1},$$
$$[x'\mapsto y']=[x\mapsto y]m_{2}.$$

If we suppose that $x'\neq g\cdot x$, for all $g\in G$, it follows that 
$$[x\mapsto y](x')=[x'\mapsto y']m_{1}(x')$$
$$x'=[x'\mapsto y'](m_{1}(x')).$$
If we also suppose that $m_{1}(x')=h\cdot x'$, for some $h\in G$, then it happens that 
$$x'=[x'\mapsto y'](h\cdot x')=h\cdot y'$$
which is a contradictions. Therefore $m_{1}(x')\neq h\cdot x'$, for every $g\in G$, nevertheless, this implies that
$$x'=[x'\mapsto y'](m_{1}(x'))=m_{1}(x')$$
which is also a contradiction. In consequence $x'=g\cdot x$, for some $g\in G$. \\

On the other hand, if $x'=g\cdot x$, for some $g\in G$, because of \ref{equivalence} it follows that $$[x'\mapsto y']=[g\cdot x\mapsto y']=[x\mapsto g^{-1}\cdot y],$$ 
then, as $$[x\mapsto y]=[x\mapsto g^{-1}\cdot y][x\mapsto y],$$
this implies that $[x\mapsto y]S\subseteq [x\mapsto g^{-1}\cdot y]S$. The analogous proves that second containment, and hence $[x\mapsto y]\sim_{\mathcal{R}}[x'\mapsto y']$. 
\end{proof}

\begin{teorema}
Given an elementary collapsing $\tau\in \EndG$ of type $(H,[K]_{N_{H}})$, and let $\eta\in \EndG$ be such that $\tau \sim_{\mathcal{L}} \eta$, then $\eta$ is an elementary collapsing of the same type. 
\end{teorema}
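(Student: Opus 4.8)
The plan is to show that the entire defining data of $\tau$ as an elementary collapsing transfers verbatim to $\eta$, using the two structural results already established for the $\mathcal{L}$-relation. The starting point is Theorem~\ref{L-ker}: since $\tau \sim_{\mathcal{L}} \eta$, we immediately obtain $\ker(\tau) = \ker(\eta)$. Let $x, y \in X$ be the witnesses realizing $\tau$ as an elementary collapsing of type $(H, [K]_{N_{H}})$, so that conditions (i)--(iv) of the definition hold for $\tau$ with this pair; the goal is to verify that the same pair $x, y$ and the same subgroup data witness $\eta$.

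Next I would dispose of the conditions that depend only on the points $x, y$ and on the kernel. Conditions (i), namely $Gx \neq Gy$, and (ii), namely $G_{x} = H$, are statements about the points alone and make no reference to the function, so they hold for $\eta$ unchanged. Condition (iv) --- the explicit description of the kernel as the union of the diagonal, the orbit pairs $(g\cdot x, g\cdot y)$, and the coincidence pairs $(g\cdot x, h\cdot x)$ with $h^{-1}g \in G_{y}$ --- holds for $\eta$ precisely because $\ker(\eta) = \ker(\tau)$ has exactly this form. Thus three of the four defining conditions are inherited directly from the kernel equality.

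The only condition whose verification requires genuine work is (iii), since it mentions the value $\tau(x)$, which a priori differs from $\eta(x)$. This is where Proposition~\ref{esta} enters: because $\tau \sim_{\mathcal{L}} \eta$, we have $G_{\eta(x)} = G_{\tau(x)}$ for every $x \in X$, in particular for our witness point. Condition (iii) for $\tau$ gives $[G_{y}]_{N_{H}} = [G_{\tau(x)}]_{N_{H}} = [K]_{N_{H}}$; substituting $G_{\tau(x)} = G_{\eta(x)}$ yields $[G_{y}]_{N_{H}} = [G_{\eta(x)}]_{N_{H}} = [K]_{N_{H}}$, which is exactly condition (iii) for $\eta$.

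Having verified all four conditions with the same witnesses $x, y$ and the same subgroups $H$ and $K$, I would conclude that $\eta$ is an elementary collapsing of type $(H, [K]_{N_{H}})$, the same type as $\tau$. I expect the main (and essentially only) obstacle to be condition (iii): matching $G_{\eta(x)}$ with $G_{\tau(x)}$, which is resolved cleanly by Proposition~\ref{esta}, while everything else is a direct transfer granted by the kernel equality of Theorem~\ref{L-ker}.
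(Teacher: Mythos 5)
Your proposal is correct and follows essentially the same route as the paper's own proof: kernel equality from Theorem~\ref{L-ker} handles conditions (i), (ii), and (iv) with the same witnesses $x,y$, and Proposition~\ref{esta} supplies $G_{\tau(x)}=G_{\eta(x)}$ to settle condition (iii) and match the type. Your write-up is in fact slightly more explicit than the paper's about which conditions transfer for which reason, but the decomposition and the two key lemmas invoked are identical.
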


\begin{proof}
As $\tau$ is an elementary collapsing of type $(H,[K]_{N_{H}})$, there already exist elements $x,y$ such that $G_{x}=H$, $[G_{y}]_{N_{H}}=[G_{\tau(x)}]_{N_{H}}=[K]_{N_{H}}$ and , as $\tau \sim_{\mathcal{L}}\eta$ implies that $ker(\tau)=ker(\eta)$, the kernel of $\eta$ already of the form the definition demands, this means that $\eta$ is already an elementary collapsing of type $(H,[G_{\eta(x)}]_{N_{H}})$. The only missing part is to prove that $[G_{\tau(x)}]_{N_{H}}=[K]_{N_{H}}=[G_{y}]_{N_{G}}=[G_{\eta(x)}]_{N_{H}}$. As $\tau \sim_{\mathcal{L}} \eta$, because of the proposition \ref{esta}, it follows that $G_{\tau(x)}=G_{\eta(x)}$, and therefore $[G_{\tau(x)}]_{N_{H}}=[G_{\eta(x)}]_{N_{H}}$. \end{proof}

An interesting comment about this statement is that it is a necessary but not sufficient condition for being $\mathcal{L}$-related to an elementary collapsing, as it is pointed out in the following example. 

\begin{ejemplo}
For the same $G$-set in the example \ref{ejemplo1}, the functions $[1\mapsto 0]$ and $[1 \mapsto 3]$ are elementary collapsings of the same type, but they are not $\mathcal{L}$-related.  
\begin{figure}[ht]
\centering
\includegraphics[width=2.5in]{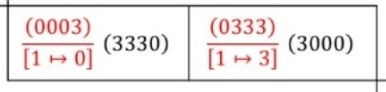}
\caption{Elementary collapsings of the same type not $\mathcal{L}$-related.}
\end{figure}

\end{ejemplo}

\begin{corolario}
Given an elementary collapsing $\tau \in \EndG$ of type $(H,[K]_{N_{H}})$, and let $\eta \in \EndG$ be such that $\tau \sim_{\mathcal{H}} \eta$, then $\eta$ is an elementary collapsing of the same type.
\end{corolario}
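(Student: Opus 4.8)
The plan is to exploit the fact that the $\mathcal{H}$ relation is, by its very definition, contained in the $\mathcal{L}$ relation, so the corollary will follow from the preceding theorem with essentially no extra work. First I would recall the definition of the $\mathcal{H}$ relation given in the Introduction: for $\tau,\eta\in\EndG$, the statement $\tau \sim_{\mathcal{H}} \eta$ means precisely that $\tau \sim_{\mathcal{L}} \eta$ \emph{and} $\tau \sim_{\mathcal{R}} \eta$. In particular, the hypothesis $\tau \sim_{\mathcal{H}} \eta$ immediately yields $\tau \sim_{\mathcal{L}} \eta$.

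Having reduced to $\mathcal{L}$-relatedness, I would then invoke the previous theorem verbatim: it guarantees that any $\eta\in\EndG$ satisfying $\tau \sim_{\mathcal{L}} \eta$, where $\tau$ is an elementary collapsing of type $(H,[K]_{N_{H}})$, is itself an elementary collapsing of the same type. Applying this to our $\eta$ gives the conclusion directly. I do not expect any genuine obstacle, since the $\mathcal{R}$-relatedness half of the $\mathcal{H}$ relation is not even needed for the argument; the content is entirely carried by the theorem, and the corollary is a one-line consequence of the containment $\mathcal{H}\subseteq\mathcal{L}$.
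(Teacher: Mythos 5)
Your proof is correct and matches the paper's own argument exactly: the paper likewise notes that $\sim_{\mathcal{H}}$ implies $\sim_{\mathcal{L}}$ by definition and then applies the preceding theorem on $\mathcal{L}$-related elements. Nothing further is needed.
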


This follows after $\sim_{\mathcal{H}}$ implies $\sim_{\mathcal{L}}$ by its definition.

\begin{lema}\label{el chido}
Given $\tau\in \EndG$, then $\tau$ is an elementary collapsing of type $(G_{x},[G_{y}]_{N_{G_{x}}})$ if and only if there exists a fixing elementary collapsing $[x'\mapsto y']$ such that $\tau \sim_{\mathcal{R}} [x'\mapsto y']$.
\end{lema}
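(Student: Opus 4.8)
The plan is to prove both implications through the "almost injective" characterisation of Proposition \ref{resulto importante} together with the bijective support. For the forward implication, suppose $\tau$ is an elementary collapsing of type $(G_x,[G_y]_{N_{G_x}})$, with witnesses $x,y$ and, by Proposition \ref{resulto importante}, with $\mathrm{Im}(\tau)=X\setminus Gz$ where $G_z=G_x$. I would form its bijective support $\widehat\tau$, which is a unit of $S=\EndG$, and compute $\tau\widehat{\tau}^{-1}$ directly: for $t\notin Gz$ the point $\widehat\tau^{-1}(t)$ lies in $X\setminus Gx$ and is the unique $\tau$-preimage of $t$, so $\tau\widehat\tau^{-1}(t)=t$, while for $t=g\cdot z$ one has $\widehat\tau^{-1}(g\cdot z)=g\cdot x$ and hence $\tau\widehat\tau^{-1}(g\cdot z)=g\cdot\tau(x)$. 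This shows $\tau\widehat\tau^{-1}=[z\mapsto\tau(x)]$, which is a fixing elementary collapsing because $G_z=G_x\le G_{\tau(x)}$ and $\tau(x)\in\mathrm{Im}(\tau)=X\setminus Gz$ forces $Gz\neq G\tau(x)$. Since $\widehat\tau^{-1}$ is a unit, right multiplication by it preserves the $\mathcal R$-class, so $\tau\sim_{\mathcal R}[z\mapsto\tau(x)]$, giving the desired fixing collapsing.

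For the converse, assume $\tau\sim_{\mathcal R}[x'\mapsto y']$. First, the lemma that $\mathcal R$-related functions share their image gives $\mathrm{Im}(\tau)=\mathrm{Im}([x'\mapsto y'])=X\setminus Gx'$, so $\tau$ misses exactly one orbit; this is condition (ii) of Proposition \ref{resulto importante} with $z:=x'$, pending the stabiliser equality. Because $X$ is finite, the induced map on $X/G$ is a surjection onto the $|X/G|-1$ orbits other than $Gx'$, so exactly one pair of distinct orbits $O_a,O_b$ is identified onto a common orbit $O_c$ and all remaining orbits are sent injectively to distinct orbits. Next I would use the factorisation $[x'\mapsto y']=\tau m$ with $m\in\EndG$: evaluating on $X\setminus Gx'$ gives $\tau(m(t))=t$, so $m$ restricts to a $G$-equivariant section of $\tau$ over $\mathrm{Im}(\tau)$; $G$-equivariance of $m$ and $\tau$ then forces $G_{m(w)}=G_w$ for each $w\in\mathrm{Im}(\tau)$, i.e. every image orbit admits a preimage orbit on which $\tau$ is bijective. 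Combining this with the orbit count, $\tau$ is bijective on every source orbit except possibly one of $O_a,O_b$; naming the redundant one $Gx$, the restriction $\tau|_{X\setminus Gx}$ sends orbits bijectively to distinct orbits and is therefore injective, which is condition (i).

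It remains to identify the type, i.e. to obtain $G_x=G_z$ for a suitable $z\in Gx'$. Here I would observe that, with the partner point $y\in O_b\subseteq X\setminus Gx$ satisfying $\tau(y)=\tau(x)$, the restriction $\tau|_{X\setminus Gx}\colon X\setminus Gx\to X\setminus Gx'$ is a $G$-equivariant bijection; hence $X\setminus Gx\cong X\setminus Gx'$ as $G$-sets, and cancelling their common orbit-type multiset from that of $X$ yields $[G_x]=[G_{x'}]$, so a representative $z\in Gx'$ with $G_z=G_x$ exists. Proposition \ref{resulto importante} then certifies that $\tau$ is an elementary collapsing of type $(G_x,[G_{\tau(x)}]_{N_{G_x}})$.

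I expect the main obstacle to be this converse direction—specifically, turning the abstract relation $\tau\sim_{\mathcal R}[x'\mapsto y']$ into the two concrete facts that drive everything: that only a single orbit-level identification occurs (the finite orbit count) and that the section extracted from $[x'\mapsto y']=\tau m$ forbids collapsing on more than one orbit. The final $G$-set isomorphism argument for $[G_x]=[G_{x'}]$ is the subtle point that guarantees the \emph{type} is well defined rather than merely that $\tau$ is some elementary collapsing.
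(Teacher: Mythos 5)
Your proof is correct, and its converse direction takes a genuinely different route from the paper's. In the forward direction the two arguments are close in spirit---both exhibit explicit factorizations witnessing the equality of principal right ideals---but yours is tidier: you invert the bijective support and use that right multiplication by the unit $\widehat\tau^{-1}$ preserves $\mathcal{R}$-classes, obtaining $\tau\sim_{\mathcal{R}}\tau\widehat\tau^{-1}=[z\mapsto\tau(x)]$ in one stroke, whereas the paper separately proves $\tau=[z\mapsto y]\tau$ and $[z\mapsto y]=\tau m$ for a hand-built section $m$. In the converse, the paper works at the level of points: from $[x'\mapsto y']=\tau m_{2}$ it deduces injectivity of $m_{2}$ on $X\setminus Gx'$, declares the complement of $m_{2}(X\setminus Gx')$ to be a single orbit $Gz$, pushes kernel pairs through $m_{2}$ to get injectivity of $\tau$ on $X\setminus Gz$, and reads off the missing orbit from $\tau=[x'\mapsto y']m_{1}$. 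You instead work at the level of orbits: the shared-image lemma pins $\operatorname{Im}(\tau)=X\setminus Gx'$, finiteness of $X/G$ forces exactly one identified pair of orbits, and the section extracted from $[x'\mapsto y']=\tau m$ (with $G_{m(w)}=G_{w}$, forced by equivariance of $m$ and $\tau$ on either side of $\tau m(w)=w$) rules out collapsing inside any orbit other than the single redundant one. What your route buys is rigor at precisely the points the paper glosses over: Proposition \ref{resulto importante}(ii) demands $G_{x}=G_{z}$ for a representative $z$ of the missing orbit, and the paper applies the proposition without ever checking this (nor does it justify that the complement of $m_{2}(X\setminus Gx')$ is one orbit); your $G$-set cancellation $X\setminus Gx\cong X\setminus Gx'$, giving $[G_{x}]=[G_{x'}]$, supplies exactly the missing hypothesis, so the type is legitimately identified. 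What the paper's route buys is brevity, avoiding any discussion of orbit counts or multisets of orbit types. One small slip to fix: having named the redundant orbit $Gx$ (one of $O_{a},O_{b}$), the partner point $y$ with $\tau(y)=\tau(x)$ lies in the \emph{other} orbit of the identified pair, so writing $y\in O_{b}\subseteq X\setminus Gx$ contradicts your own labeling; the intended claim is clear and the argument is unaffected.
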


\begin{proof}
Let $z\in X$ the element whose orbit is not in the image of $\tau$, let consider $x'=z$ and $y'=y$, then we claim the following statements:

\begin{afirmacion}
It holds that:
$$\tau= [z\mapsto y]\tau.$$
\end{afirmacion}
As $z\notin Im(\tau)$, then $\tau(t)\neq g\cdot z$, for any $g\in G$, which implies that $[z\mapsto y]$ keeps these elements invariant, hence $[z\mapsto y]\tau(t)=\tau (t)$ for every $t\in X$ and the equality holds.  

\begin{afirmacion}
Considering the restriction of $\tau$ to $X\setminus Gx$, which is a bijection with the set $X\setminus Gz$, we can consider its inverse functions $\tau^{-1}$. Let define the function
$$m(t):=\left\{\begin{array}{cc} \tau^{-1}(t) & if\ t\notin Gz, \\ \tau^{-1}(g\cdot y)& if\ t=g\cdot z.   \end{array}\right.$$
It is easy to verify that $m$ is a $G$-equivariant function. It follows that $$[z\mapsto y]=\tau m.$$
\end{afirmacion}
Applying theses functions, for $t\notin Gz$
$$\tau m(t)=\tau \tau^{-1}(t)=t=[z\mapsto y](t).$$
For $t\in Gz$, there exists $g\in G$ such that $t=g\cdot z$, and
$$\tau m (t)=\tau \tau^{-1}(g\cdot y)=g\cdot y=[z\mapsto y](g\cdot z)=[z\mapsto y](t).$$
The first implication is a direct consequence of these two affirmations. \\

Now, let suppose that there exists a fixing elementary collapsing $[x'\mapsto y']$ that is $\mathcal{R}$-related to $\tau$. Then there exist elements $m_{1},m_{2}\in \EndG$ such that
$$\tau=[x'\mapsto y']m_{1}\mbox{ and } [x'\mapsto y']=\tau m_{2}.$$
First, note that $m_{2}$ restricted to $X \setminus Gx'$ is an injective function, if not, there exist elements $t_{1},t_{2}\in X\setminus Gx'$ (distinct) such that
$$m_{2}(t_{1})=m_{2}(t_{2}),$$
which implies that $$\tau m_{2}(t_{1})=\tau m_{2}(t_{2}) \implies [x'\mapsto y'](t_{1})=[x'\mapsto y'](t_{2}) \implies t_{1}=t_{2},$$
a contradiction to the choose of $t_{1}$ and $t_{2}$. Because of the $G$-equivariance and the injectivity on the restriction, there exists an element $z\in Z$ such that $Gz = m_{2}(X\setminus Gx')^{C}$.\\
We will prove, in the same way, that $\tau$ restricted to the set $X\setminus Gz$ is also injective. If $\tau$ is not injective in this set, there exist elements $z_{1},z_{2}\in X\setminus Gz$ such that 
$$\tau(z_{1})=\tau(z_{2}),$$
but, because of the injectivity of $m_{2}$, there exist elements $t_{1},t_{2}\in X\setminus Gx'$ satisfying that
$$z_{1}=m_{2}(t_{1}),$$
$$z_{2}=m_{2}(t_{2}).$$
It follows that 
$$\tau m_{2}(t_{1})=\tau m_{2}(t_{2})\implies [x'\mapsto y'](t_{1})=[x'\mapsto y'](t_{2}) \implies t_{1}=t_{2}.$$
Once again, a contradiction to the injectivity of $m_{2}$. \\

To finish this part of the proof we shall prove that there exists an element in $X$ such that its orbit is not in the image of $\tau$. For this purpose, because we already know that $\tau$ is injective in the set $X\setminus Gz$, it's enough to prove that $\tau(z)\neq g\cdot x'$, for every $g\in G$. Which is easy to verify after the equation $\tau=[x'\mapsto y']m_{1}$. As there exists an element $z\in X$ such that the restriction of $\tau$ to $X\setminus Gz$ is injective, and there exists an element $x'\in X$ such that its orbits is not in the image of $\tau$, by lemma \ref{resulto importante}, $\tau$ is an elementary collapsing.\\

The type of the collapsing is given by the elements $x=z$ and $y=\tau^{-1}(\tau(x))$, the preimage of $\tau(x)$ that is in the restriction to the set $X\setminus Gz$. We just have to verify the kernel of $\tau$, but, by the election of $x$ and $y$, and as consequence of the injectivity in the restriction (as in \ref{resulto importante}), it's not difficult to verify that 
\begin{small}
$$ker(\tau)=\{(a,a)| a\in X\}\cup \{(g\cdot x,g\cdot y),(g\cdot y,g\cdot x)| g\in G\} \cup \{(g\cdot x,h\cdot x),(h\cdot x,g\cdot x)|\ h^{-1}g\in G_{y}\}.$$
\end{small}
\end{proof}

\begin{corolario} 
Given elementary collapsings $\tau$ and $\eta$, $\tau \sim_{\mathcal{R}}\eta$ if and only if they miss the same orbit in their image. 
\end{corolario}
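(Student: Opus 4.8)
The plan is to reduce both directions to results already established, exploiting the characterization of an elementary collapsing as a map that misses exactly one orbit. Recall that by Proposition \ref{resulto importante} an elementary collapsing $\tau$ satisfies $Im(\tau) = X \setminus Gz$ for a determined orbit $Gz$; this is the orbit it ``misses''. The three ingredients I would invoke are: the lemma asserting that $f \sim_{\mathcal{R}} g$ forces $Im(f) = Im(g)$, Lemma \ref{el chido} relating each elementary collapsing $\mathcal{R}$-wise to a fixing one, and Proposition \ref{R-rel}, which characterizes when two fixing collapsings are $\mathcal{R}$-related.

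For the forward direction, assume $\tau \sim_{\mathcal{R}} \eta$. The image lemma gives $Im(\tau) = Im(\eta)$. Writing $Im(\tau) = X \setminus Gz_{\tau}$ and $Im(\eta) = X \setminus Gz_{\eta}$, equality of the images immediately forces $Gz_{\tau} = Gz_{\eta}$, so $\tau$ and $\eta$ miss the same orbit.

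For the backward direction, suppose $\tau$ and $\eta$ both miss the orbit $Gz$, so $Im(\tau) = X \setminus Gz = Im(\eta)$ with a common representative $z$. By Lemma \ref{el chido}, applied to $\tau$ with missed-orbit representative $z$, there is a fixing elementary collapsing $[z \mapsto y_{\tau}]$ with $\tau \sim_{\mathcal{R}} [z \mapsto y_{\tau}]$; likewise $\eta \sim_{\mathcal{R}} [z \mapsto y_{\eta}]$. Since both fixing collapsings have source $z$, Proposition \ref{R-rel} (taking $g = e$, as $z = e \cdot z$) yields $[z \mapsto y_{\tau}] \sim_{\mathcal{R}} [z \mapsto y_{\eta}]$. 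Transitivity of $\mathcal{R}$ then chains these relations to give $\tau \sim_{\mathcal{R}} \eta$.

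The main obstacle I anticipate is the bookkeeping in the backward direction: I must ensure the fixing collapsing that Lemma \ref{el chido} attaches to $\tau$ genuinely uses the missed orbit $Gz$ as its source, which the construction in that lemma does, since it sets $x' = z$ equal to the omitted representative, and I must verify that the same representative $z$ may be chosen for both $\tau$ and $\eta$. Once a common source $z$ is secured, Proposition \ref{R-rel} applies at once and only transitivity remains, so no genuinely delicate computation is left.
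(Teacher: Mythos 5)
Your proof is correct and follows essentially the route the paper intends: the paper states this corollary without proof immediately after Lemma \ref{el chido}, and the natural argument---the image lemma for the forward direction, the construction in Lemma \ref{el chido} together with Proposition \ref{R-rel} and transitivity of $\sim_{\mathcal{R}}$ for the converse---is exactly what you wrote. Your explicit check that the fixing collapsing attached to $\tau$ by Lemma \ref{el chido} has its source in the missed orbit (which can also be seen by applying the image lemma to $\tau \sim_{\mathcal{R}} [x'\mapsto y']$, giving $X\setminus Gz = Im(\tau)=Im([x'\mapsto y'])=X\setminus Gx'$) is a detail the paper leaves tacit, and handling it as you did closes the only gap in the corollary's implicit justification.
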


\begin{corolario}
Given an elementary collapsing $\tau\in \EndG$ of type $(H,[K]_{N_{H}})$, and let $\eta\in \EndG$ be such that $\tau \sim_{\mathcal{R}} \eta$, then $\eta$ is an elementary collapsing of some type $(H',[K']_{N_{H'}})$, moreover $H\sim_{G} H'$. 
\end{corolario}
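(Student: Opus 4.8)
The plan is to obtain both conclusions by chaining together the two characterizations already proved for $\mathcal{R}$-related elementary collapsings, using only that $\mathcal{R}$ is an equivalence relation. First I would establish that $\eta$ is itself an elementary collapsing. Since $\tau$ is an elementary collapsing, Lemma \ref{el chido} furnishes a fixing elementary collapsing $[x'\mapsto y']$ with $\tau \sim_{\mathcal{R}} [x'\mapsto y']$. Because $\sim_{\mathcal{R}}$ is transitive and symmetric and $\tau \sim_{\mathcal{R}} \eta$, I obtain $\eta \sim_{\mathcal{R}} [x'\mapsto y']$. Applying the converse direction of Lemma \ref{el chido} to $\eta$ then yields that $\eta$ is an elementary collapsing of some type $(H',[K']_{N_{H'}})$, which is the first assertion.

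For the conjugacy $H \sim_G H'$, I would track the orbit that is absent from the image. By Proposition \ref{resulto importante}, being an elementary collapsing of type $(H,[K]_{N_{H}})$ forces $Im(\tau)=X\setminus Gz$ for some $z\in X$ with $G_{z}=H$; likewise there is $z'\in X$ with $Im(\eta)=X\setminus Gz'$ and $G_{z'}=H'$. Since $\tau \sim_{\mathcal{R}} \eta$ gives $Im(\tau)=Im(\eta)$ (equivalently, by the preceding corollary, the two collapsings miss the same orbit), the omitted orbits $Gz$ and $Gz'$ coincide. Hence $z$ and $z'$ lie in a common orbit, so $z'=g\cdot z$ for some $g\in G$, and therefore $H'=G_{z'}=gG_{z}g^{-1}=gHg^{-1}$, which is exactly $H\sim_{G}H'$.

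The only point demanding care is the identification of the first type-parameter $H$ with the stabilizer of the missing-orbit representative: the definition phrases $H$ as $G_{x}$ for the colliding pair $(x,y)$, whereas the conjugacy argument needs $H$ realized as the stabilizer of the omitted orbit $Gz$. This is precisely the clause $G_{x}=G_{z}$ recorded in Proposition \ref{resulto importante}, so no new work is required; one simply invokes it for both $\tau$ and $\eta$ and uses that representatives of a single orbit have conjugate stabilizers. Everything else reduces to transitivity of $\sim_{\mathcal{R}}$ together with the two cited results, so I do not expect a genuine obstacle here.
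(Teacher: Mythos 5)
Your proposal is correct and follows essentially the same route as the paper's own proof: Lemma~\ref{el chido} together with transitivity of $\sim_{\mathcal{R}}$ gives that $\eta$ is an elementary collapsing, and Proposition~\ref{resulto importante} yields the conjugacy $H\sim_{G}H'$. Your write-up of the conjugacy step (identifying the missing orbits via equality of images and conjugating the stabilizers of their representatives) simply makes explicit the details that the paper's proof leaves implicit when it cites those two results.
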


\begin{proof}
If $\tau$ is an elementary collapsing, for \ref{el chido}, it is $\mathcal{R}$-related to a fixing elementary collapsing, afterwards, by transitivity, $\eta$ is $\mathcal{R}$-related with this fixing elementary collapsing, and, in consequence, it is also an elementary collapsing of some type.
The fact that $H\sim_{G} H'$ is a consequence of the lemma \ref{el chido} and the proposition \ref{resulto importante}. 
\end{proof}

\begin{lema}
If $\tau, \eta \in \EndG$ are elementary collapsings of type $(H,[K]_{N_{H}})$ and $(H',[K']_{N_{H'}})$ respectively, if it happens that $\tau \sim_{\mathcal{R}}\eta$ then $H \sim_{G} H'$ . 
\end{lema}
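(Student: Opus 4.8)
The plan is to read off, from each elementary collapsing, the orbit it fails to hit, to recognize the stabilizer of that orbit as the first entry of its type, and then to force the two missing orbits to coincide; conjugacy of the stabilizers is then immediate from the orbit structure.

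First I would apply Proposition \ref{resulto importante}. Writing the type of $\tau$ as $(G_x,[G_y]_{N_{G_x}})$ with $G_x=H$, the proposition guarantees an element $z\in X$ with $\mathrm{Im}(\tau)=X\setminus Gz$ and, crucially, $G_z=G_x=H$. Applying the same proposition to $\eta$, whose type is $(H',[K']_{N_{H'}})$, yields an element $z'\in X$ with $\mathrm{Im}(\eta)=X\setminus Gz'$ and $G_{z'}=H'$. Thus the missing orbits $Gz$ and $Gz'$ carry exactly the stabilizer data $H$ and $H'$, respectively.

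Next I would invoke the corollary following Lemma \ref{el chido}, which asserts that two elementary collapsings are $\mathcal{R}$-related precisely when they miss the same orbit in their image. Since $\tau\sim_{\mathcal{R}}\eta$ and both are assumed to be elementary collapsings, this forces $Gz=Gz'$. Finally, $z$ and $z'$ lying in a common orbit means $z'=g\cdot z$ for some $g\in G$, and the standard identity for stabilizers along an orbit gives $G_{z'}=gG_zg^{-1}$, that is $H'=gHg^{-1}$. Hence $H\sim_G H'$, as claimed.

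The argument is essentially a bookkeeping step layered on top of Proposition \ref{resulto importante} and the quoted corollary, so I do not expect a serious obstacle. The one point that demands care is the identification $G_z=H$ (respectively $G_{z'}=H'$): this must be drawn from the clause $G_x=G_z$ in Proposition \ref{resulto importante}, and not confused with the $N$-conjugacy datum $[K]_{N_H}$ appearing as the second coordinate of the type, which records information about $\mathrm{Im}(\tau)$ rather than about the missing orbit. Keeping the first coordinate of the type and the stabilizer of the omitted orbit aligned is the only step where a slip could occur.
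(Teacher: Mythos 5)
Your proof is correct, but it follows a different citation path than the paper. The paper's proof invokes Lemma \ref{el chido} to attach to each of $\tau$ and $\eta$ a fixing elementary collapsing in its $\mathcal{R}$-class, then uses transitivity of $\sim_{\mathcal{R}}$ and Proposition \ref{R-rel} to conclude that the two fixing collapsings $[x\mapsto y]$ and $[x'\mapsto y']$ satisfy $x'=g\cdot x$; the final passage from $x'=g\cdot x$ to $H\sim_G H'$ is left implicit. You instead work directly with the missing orbits: Proposition \ref{resulto importante} gives representatives $z$, $z'$ of the orbits omitted from $\operatorname{Im}(\tau)$, $\operatorname{Im}(\eta)$ with $G_z=H$ and $G_{z'}=H'$, the corollary following Lemma \ref{el chido} forces $Gz=Gz'$, and the identity $G_{g\cdot z}=gG_zg^{-1}$ finishes the argument. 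Your route is arguably more self-contained: the direction of the corollary you need ($\mathcal{R}$-related implies same missing orbit) already follows from the paper's elementary lemma that $\mathcal{R}$-related functions have equal images, so you avoid the detour through fixing collapsings and the transitivity step entirely; you also spell out the stabilizer-conjugation bookkeeping that the paper's two-line proof omits. What the paper's approach buys instead is a reinforcement of its structural theme, namely that fixing elementary collapsings serve as canonical representatives of the $\mathcal{R}$-classes of all elementary collapsings. Your caution about distinguishing the first coordinate of the type (the stabilizer $H=G_x=G_z$ of the missing orbit) from the $N_H$-conjugacy datum $[K]_{N_H}$ is well placed; that is exactly where the identification could go wrong, and Proposition \ref{resulto importante} is the correct source for it.
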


\begin{proof}
For \ref{el chido}, there exist fixing elementary collapsings $[x\mapsto y]$ and $[x'\mapsto y']$ such that $\tau \sim_{\mathcal{R}} [x\mapsto y] $ and $\eta \sim_{\mathcal{R}} [x'\mapsto y']$. By transitivity of the relations $[x\mapsto y]\sim_{\mathcal{R}}[x'\mapsto y']$, which is only posiblle if and only if $x'=g\cdot x$, for some $g\in G$, by \ref{R-rel}.
\end{proof}

An interesting observation is that being an elementary collapsing is sufficient to guarantee that the element is $\mathcal{R}$-related to a fixing elementary collapsing, and also $\mathcal{L}$-related to another fixing elementary collapsing. The situation is that these two fixing elementary collapsings may be different. In general, being an elementary collapsing is not sufficient to be $\mathcal{H}$-related to a fixing elementary collapsing.

\begin{ejemplo}
We have computed the Green's relations to a $G$-set as follows.

\begin{figure}[ht]
\centering
\includegraphics[width=5.5in]{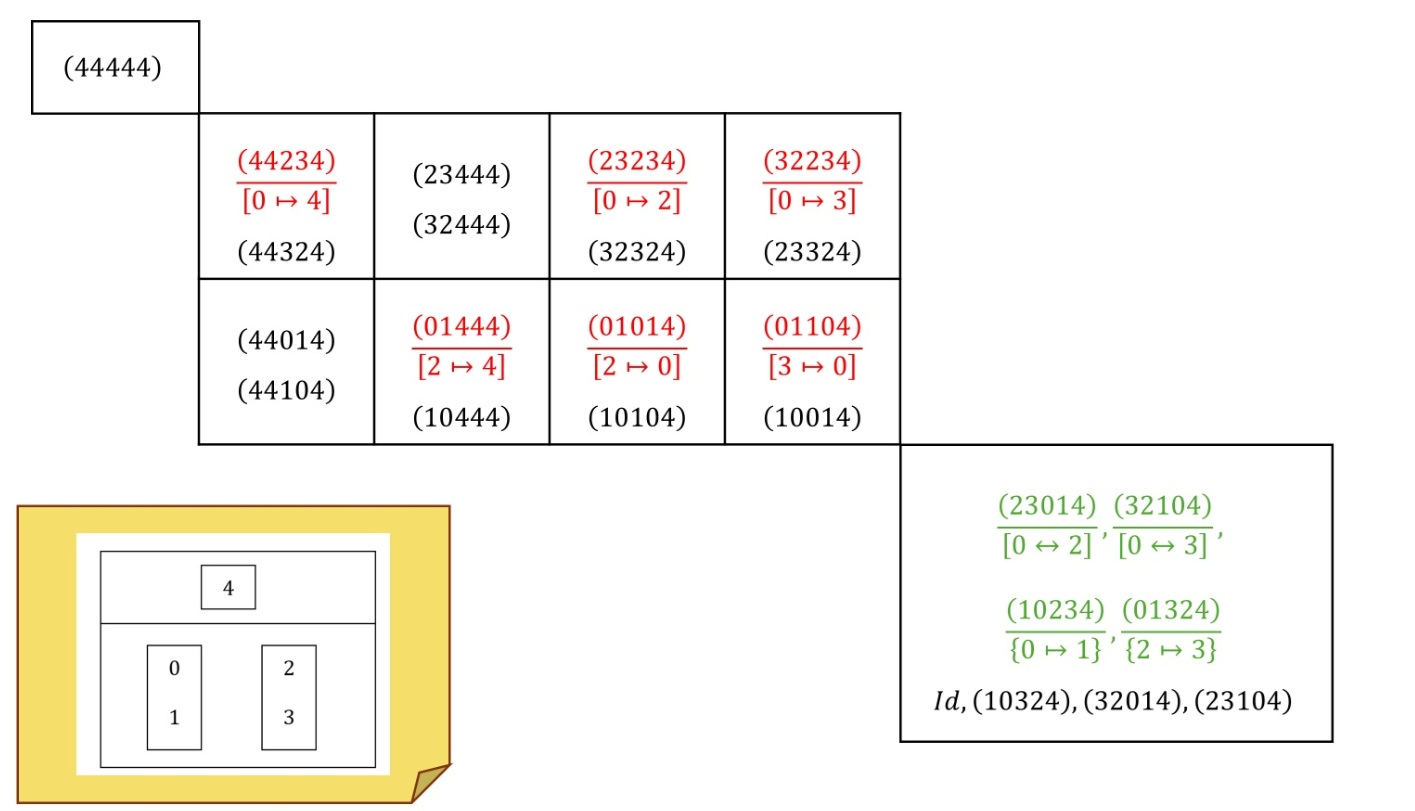}
\caption{Elementary collapsings not $\mathcal{H}$-related to a fixing elementary collapsing.}
\end{figure}

The results allow us to claim that $(23444)$, $(32444)$, $(44014)$ and $(44104)$ are elementary collapsings, but in the figure we can identify that they are not $\mathcal{H}$-related to any fixing elementary collapsing.

\end{ejemplo}

\begin{teorema}
There exist only one fixing elementary collapsing in its own $\mathcal{H}$-class.  
\end{teorema}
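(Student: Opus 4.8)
The plan is to show that any two fixing elementary collapsings lying in a single $\mathcal{H}$-class must coincide. So suppose $[x\mapsto y]$ and $[x'\mapsto y']$ are fixing elementary collapsings with $[x\mapsto y]\sim_{\mathcal{H}}[x'\mapsto y']$; the goal is to conclude $[x\mapsto y]=[x'\mapsto y']$. Since $\sim_{\mathcal{H}}$ is by definition $\sim_{\mathcal{L}}\cap\sim_{\mathcal{R}}$, I would exploit each of the two relations separately: the $\mathcal{R}$-part to align the orbits being collapsed, and the $\mathcal{L}$-part to pin down the targets.

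First I would use the $\mathcal{R}$-relation. Since $[x\mapsto y]\sim_{\mathcal{R}}[x'\mapsto y']$, Proposition \ref{R-rel} gives an element $g\in G$ with $x'=g\cdot x$. I would then normalize the source point using Proposition \ref{equivalence}: writing $[x'\mapsto y']=[g\cdot x\mapsto y']$ and applying the identity $[a\mapsto b]=[g^{-1}\cdot a\mapsto g^{-1}\cdot b]$ with $a=g\cdot x$, $b=y'$, I obtain
\[
[x'\mapsto y']=[x\mapsto g^{-1}\cdot y'].
\]
Relabeling $y''=g^{-1}\cdot y'$, this reduces the problem to two fixing elementary collapsings with the \emph{same} source: $[x\mapsto y]$ and $[x\mapsto y'']$, which remain $\mathcal{H}$-related (hence $\mathcal{L}$-related). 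Note that $[x\mapsto y'']$ is still a genuine fixing collapsing, so $Gx\neq Gy''$, and likewise $Gx\neq Gy$.

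Next I would invoke the $\mathcal{L}$-relation through Theorem \ref{L-ker}, which yields $\ker([x\mapsto y])=\ker([x\mapsto y''])$. The decisive observation is that, since $Gx\neq Gy$, the point $y$ lies outside $Gx$ and is therefore fixed by any fixing collapsing with source orbit $Gx$; in particular $[x\mapsto y](y)=y=[x\mapsto y](x)$ shows $(x,y)\in\ker([x\mapsto y])$. By equality of kernels, $(x,y)\in\ker([x\mapsto y''])$, so $[x\mapsto y''](x)=[x\mapsto y''](y)$. Evaluating both sides, the left side is $y''$ while the right side is $y$ (again because $y\notin Gx$ is a fixed point of $[x\mapsto y'']$), forcing $y''=y$. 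Hence $[x\mapsto y'']=[x\mapsto y]$, and tracing back the normalization gives $[x'\mapsto y']=[x\mapsto y]$, as desired.

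I expect the only delicate point to be the bookkeeping in the normalization step: one must check that after conjugating the source to $x$ via Proposition \ref{equivalence}, the resulting map is still a fixing elementary collapsing with $Gx\neq Gy''$, so that the fixed-point argument applies to $y$. The rest is essentially automatic once one recognizes that a fixing collapsing fixes everything outside $Gx$, so that the single kernel pair $(x,y)$ already determines the target. A secondary consistency check worth recording is that Proposition \ref{esta} independently forces $G_y=G_{y''}$, which is compatible with (and weaker than) the conclusion $y=y''$ obtained above.
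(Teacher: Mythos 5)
Your proof is correct and follows essentially the same route as the paper's: extract $x'=g\cdot x$ from the $\mathcal{R}$-relation via Proposition \ref{R-rel}, use the kernel characterization of $\sim_{\mathcal{L}}$ (Theorem \ref{L-ker}) together with the pair $(x,y)\in\ker$ and the fact that $y\notin Gx$ is fixed, and finish with Proposition \ref{equivalence}. The only (cosmetic) difference is that you apply Proposition \ref{equivalence} at the start to normalize both collapsings to the same source point, whereas the paper applies it at the end to conclude $[x'\mapsto y']=[g\cdot x\mapsto g\cdot y]=[x\mapsto y]$.
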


\begin{proof}
Let suppose that $\tau$ and $\eta$ are fixing elementary collapsings of some type, such that they are $\mathcal{H}$-related. By it's definition, there exists elements $x,x',y,y' \in X$ such that $\tau=[x\mapsto y]$
 and $\eta=[x'\mapsto y']$. As $\tau \sim_{\mathcal{R}}\eta$, because of \ref{R-rel}, there exists a $g\in G$ such that $x'=g\cdot x$. This allows us to cumpute $\eta(x)$. 
 $$y'=\eta(x')=\eta(g\cdot x)=g\cdot \eta(x) \implies \eta(x)=g^{-1}\cdot y'.$$

 Afterwards, as $\tau \sim_{\mathcal{L}}\eta$, we can assure, for \ref{L-ker}, that $ker(\tau)=ker(\eta)$ or, equivalently,  $\tau(a)=\tau(b)$ if and only if $\eta(a)=\eta(b)$. We already know that $\tau(x)=\tau(y)$, by its definition, which implies that $\eta(x)=\eta(y)$. On one side, $\eta(x)=g^{-1}\cdot y'$, and in the other hand, $\eta(y)=y$, by the fix of $\eta$, in consequence, $y'=g\cdot y$. As $x'=g\cdot x$ and $y'=g\cdot y$, the proposition \ref{equivalence} allows us to claim that $\tau=\eta$. 
 \end{proof}

\begin{corolario}
Every element in the $\mathcal{D}$-class of an elementary collapsing is also an elementary collapsing of some type.
\end{corolario}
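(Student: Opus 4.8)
The plan is to reduce the statement to the two transfer results already proved in this section: the theorem asserting that any function $\mathcal{L}$-related to an elementary collapsing is again an elementary collapsing of the same type, and the corollary asserting that any function $\mathcal{R}$-related to an elementary collapsing is again an elementary collapsing (of some type, with conjugate point-stabilizer). Since the $\mathcal{D}$-relation is by definition the composite of $\mathcal{L}$ and $\mathcal{R}$, the whole argument should amount to chaining these two results through a single intermediate element.

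First I would unwind the definition of $\sim_{\mathcal{D}}$. Let $\tau \in \EndG$ be an elementary collapsing, say of type $(H,[K]_{N_{H}})$, and let $\rho$ be an arbitrary element of its $\mathcal{D}$-class, so $\rho \sim_{\mathcal{D}} \tau$. By the definition of the $\mathcal{D}$-relation there exists an intermediate element $c \in \EndG$ such that $\rho \sim_{\mathcal{L}} c$ and $c \sim_{\mathcal{R}} \tau$.

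Next I would apply the two transfer results in order. Using the symmetry of $\mathcal{R}$ we have $\tau \sim_{\mathcal{R}} c$; since $\tau$ is an elementary collapsing, the $\mathcal{R}$-corollary guarantees that $c$ is itself an elementary collapsing of some type $(H',[K']_{N_{H'}})$ with $H \sim_{G} H'$. Now that $c$ is known to be an elementary collapsing and $c \sim_{\mathcal{L}} \rho$, the $\mathcal{L}$-theorem yields that $\rho$ is an elementary collapsing of the same type as $c$. In particular $\rho$ is an elementary collapsing, which is exactly the desired conclusion.

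I do not expect a genuine obstacle here: the content of the corollary is precisely that membership in the class of elementary collapsings is preserved separately by $\mathcal{L}$ and by $\mathcal{R}$, and the factorization $\mathcal{D} = \mathcal{L}\circ\mathcal{R}$ lets a single intermediate $c$ bridge the two. The only point deserving a line of care is matching the orientation of the composite in the definition of $\mathcal{D}$ with the orientations in which the two transfer results are stated; because $\mathcal{L}$ and $\mathcal{R}$ are symmetric and both transfer results are available, whichever way one reads the intermediate element the same conclusion follows.
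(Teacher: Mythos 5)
Your proof is correct, and it is actually more careful than the paper's own justification. The paper dismisses this corollary in one sentence, saying it follows ``from the fact that $\sim_{\mathcal{L}}$ or $\sim_{\mathcal{R}}$ implies $\sim_{\mathcal{D}}$''; read literally, that fact only shows that the $\mathcal{L}$-class and the $\mathcal{R}$-class of an elementary collapsing are \emph{contained in} its $\mathcal{D}$-class, which is the wrong direction for the claim being made. What the corollary really requires is precisely your argument: unfold $\rho \sim_{\mathcal{D}} \tau$ into $\rho \sim_{\mathcal{L}} c$ and $c \sim_{\mathcal{R}} \tau$ for some intermediate $c \in \mathrm{End}_{G}(X)$, apply the $\mathcal{R}$-transfer corollary (with the symmetry of $\mathcal{R}$) to conclude that $c$ is an elementary collapsing of some type, and then apply the $\mathcal{L}$-transfer theorem to conclude that $\rho$ is an elementary collapsing of the same type as $c$. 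Your closing remark about orientation is also apt: since both transfer results are available and both relations are symmetric, either reading of the intermediate element in the definition of $\mathcal{D}$ gives the conclusion. In short, you have supplied the chaining argument through the intermediate element that the paper's one-line proof implicitly relies on but does not spell out, and your version records the extra information (conjugacy of the stabilizer parameters) that propagates along the chain.
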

This is a consequence of the fact that $\sim_{\mathcal{L}}$ or $\sim_{\mathcal{R}}$ implies $\sim_{\mathcal{D}}$, directly from their definitions.\\

In this work, we presented detailed characterizations and observations of the Green relations of a specific class of monoids. These relations are described in terms of particular elements within the monoid, which provides a deeper understanding of its internal structure. This approach marks a significant step forward in achieving a comprehensive understanding of the properties and structure of this mathematical object, which has thus far been relatively underexplored. Through thorough analysis, we have generalized several results, defined, and proven various particularities associated with a special type of element, the elemental collapsings, which play a key role in determining these relations. This work opens new perspectives for future research in this area and establishes a solid foundation for further studies on the theory of monoids and their Green relations.


\begin{thebibliography}{XXX}







\bibitem{cita11} Araújo, J., Mitchell, J.D.: Relative ranks in the monoid of endomorphisms of an independence algebra. Monatsh. Math. 151(1), 1–10 (2007).

\bibitem{cita21}  Bulman-Fleming, S.: Regularity and products of idempotents in endomorphism monoids of projective acts. Mathematika 42(2), 354–367 (1995).

\bibitem{cita22} Bulman-Fleming, S., Foutain, J.: Products of idempotent endomorphisms of free acts of infinite rank. Monatsh. Math. 124(1), 1–16 (1997).



\bibitem{paper} Castillo-Ramírez, A., Ruiz-Medina, R. H. (2023): The relative rank of the endomorphism monoid of a finite G-set, 
Semigroup Forum (106), Springer Science and Business Media LLC, 51-66. doi.org/10.1007


\bibitem{cita12} Dandan, Y., Dolinka, I., Gould, V.: Free idempotent generated semigroups and endomorphism monoids of free G-acts. J. Algebra 429, 133–176 (2015).



\bibitem{cita23} Fleischer, V., Knauer, U.: Endomorphism monoids of acts are wreath products of monoids with small categories. In: Jürgensen, H., Lallement, G., Weinert, H.J. (eds.) Semigroups Theory and Applications: Lecture Notes in Mathematics, vol. 1320, pp. 84–96. Springer, Berlin (1988).

\bibitem{Gould} Gould, Victoria,  Grau, Ambroise and Johnson, Marianne. (2023). The structure of End($\mathcal{T}_n$). 10.48550/arXiv.2307.11596. 

\bibitem{cita13} Gould, V.: Independence algebras. Algebra Universalis 33, 294–318 (1995).

\bibitem{cita14} Gould, V.: Independence algebras, basis algebras and semigroups of quotients. Proc. Edinburgh Math. Soc. 53(3), 697–729 (2010).
\bibitem{Gril} Grillet, Mireille P. (1970). "Green's relations in a semiring". Port. Math. 29: 181–195. Zbl 0227.16029
\bibitem{Hig} Peter M. Higgins (1992). Techniques of semigroup theory. Oxford University Press. p. 28. ISBN 978-0-19-853577-5.
\bibitem{Howie}  Howie, J. M., Schein, B. Fundamentals of Semigroup Theory.  Oxford University Press, USA, 1995.

\bibitem{cita24} Knauer, U., Mikhalev, A.V.: Endomorphism monoids of acts over monoids. Semigroup Forum 6(1), 50–58 (1973).
\bibitem{Pet} Petraq Petro (2002) Green's relations and minimal quasi-ideals in rings, Communications in Algebra 30(10): 4677–4686.

\bibitem{rotman} Rotman, Joseph J. , An Introduction to the Theory of Groups, Graduate Texts in Mathematics, Volumen 148, Springer Science $\&$ Business Media, 2012.







\end{thebibliography}
\end{document}